\newcommand{\x}{\mathbf{x}}
\newcommand{\y}{\mathbf{y}}
\newcommand{\R}{\mathbb{R}}
\newcommand{\A}{\mathcal{A}}
\newtheorem{definition}{Definition}
\newtheorem{problem}{Problem}
\newtheorem{remark}{Remark}
\newtheorem{assumption}{Assumption}
\newcommand{\map}[3]{#1: #2 \rightarrow #3}
\newcommand{\B}{\mathcal{B}}
\newcommand{\D}{\mathcal{D}}
\newcommand{\T}{\mathcal{T}}
\newtheorem{theorem}{Theorem}[section]
\newtheorem{proposition}[theorem]{Proposition}
\newtheorem{lemma}[theorem]{Lemma}
\title{Denoising Diffusion-Based Control of Nonlinear Systems

}
\author{
  Karthik Elamvazhuthi, Darshan Gadginmath, Fabio Pasqualetti\\
  Department of Mechanical Engineering \\
  University of California, Riverside \\
  \texttt{\{karthike@ucr.edu, dgadg001@ucr.edu,fabiopas\}@engr.ucr.edu} \\
}
\begin{document}
\maketitle

\begin{abstract}
We propose a novel approach based on Denoising Diffusion Probabilistic Models (DDPMs) to  control nonlinear dynamical systems. DDPMs are the state-of-art of generative models that have achieved success in a wide variety of sampling tasks. In our framework, we pose the feedback control problem as a generative task of drawing samples from a target set under control system constraints. The forward process of DDPMs constructs trajectories originating from a target set by adding noise. We learn to control a dynamical system in reverse such that the terminal state belongs to the target set. For control-affine systems without drift, we prove that the control system can exactly track the trajectory of the forward process in reverse, whenever the the Lie bracket based condition for controllability holds. We numerically study our approach on various nonlinear systems and verify our theoretical results. We also conduct numerical experiments for cases beyond our theoretical results on a physics-engine.
\end{abstract}

\keywords{Diffusion Models  \and Nonlinear Control Systems \and  Generative Modeling \and Geometric Control}

\section{Introduction} \label{sec:intro}

Feedback control plays a pivotal role in modern engineering and technology. The essence of feedback control lies in using the value of the state of the system to compute an input that steers a dynamical system to a desired target state. This process is crucial in a multitude of applications. For instance, in robotics, feedback control is fundamental for executing tasks such as pick-and-place operations, where precise movement and placement of objects are required, and in coverage control tasks, which involve area scanning and monitoring. Feedback control is also integral for power network management, where it helps maintain stability and efficiency in the density and generation of power. In each of these scenarios, feedback control is not just about reaching a target state, but doing so in an efficient, robust, and safe manner, which is essential for the functioning of complex modern systems. In this paper, we explore feedback control design from the perspective of generative modeling.

Generative modeling, an important tool in machine learning, addresses the challenge of drawing new samples from an unknown data density when provided with a set of data samples. Among the cutting-edge techniques in this domain, Denoising Diffusion Probabilistic Models (DDPMs) \cite{ho2020denoising,yang2022diffusion} have been successfully applied in diverse fields such as medical imaging \cite{chung2022score}, path planning \cite{yang2022diffusion}, and shape generation \cite{zhou20213d}. DDPMs comprise two essential components: a \emph{forward process}, responsible for mapping the data density to a desired noise density, and a \emph{reverse process}, which accomplishes the inverse transformation by approximately retracing the trajectory of the probability density established by the forward process. This bidirectional approach to sampling distinguishes DDPMs from traditional methods, where only a forward process is employed to map the noise density to the data density. Classical techniques like normalizing flows \cite{kobyzev2020normalizing},  optimal transport-based methods \cite{finlay2020train}, Metropolis-Hastings algorithms \cite{turner2019metropolis} and Langevin sampling \cite{bussi2007accurate} share this unidirectional characteristic. This difference is one of the keys to the broad success of DDPMs as generative models.

For control systems, DDPMs offer a novel and largely unexplored avenue for controller design by reframing control design as a generative modeling task. This approach facilitates the development of a unique method to guide a controlled dynamical system towards a desired probability density. A key benefit of this technique is its ability to concurrently construct feedback controllers and plan trajectories for the system within potentially non-convex environments. Traditional approaches to trajectory planning in controlled dynamical systems usually adopt a dual-layer strategy: one layer for path planning within the spatial domain, and another for designing a local stabilizer that maintains the system's stability around the planned path. However, conventional methods like optimal control or reinforcement learning for direct controller design often face challenges in nonlinear systems, high-dimensional state spaces, and non-convex environments. These challenges echo the performance disparities observed between classical sampling-based methods and the capabilities of DDPMs. This comparison underscores the motivation for our proposed methodology, which leverages DDPMs to simultaneously tackle control design and trajectory planning. To illustrate, we apply this method in navigating a robot through a non-convex environment as depicted in Figure~\ref{fig:unicycle-obst}. 

\begin{figure}
\centering
\begin{multicols}{2}
\begin{tikzpicture}
  \node (img1)  {\includegraphics[width=0.3000\textwidth]{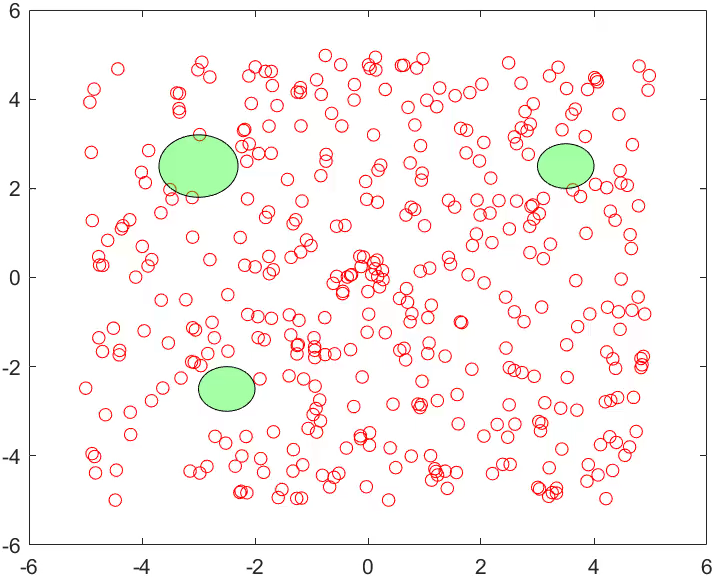}};
  \node[above of= img1, node distance=-0.5cm, yshift=-2.1cm,font=\color{black}]  {Initial density};  
\end{tikzpicture}\columnbreak
\hspace{20mm}
\begin{tikzpicture}
  \node (img1)  {\includegraphics[width=0.3000\textwidth]{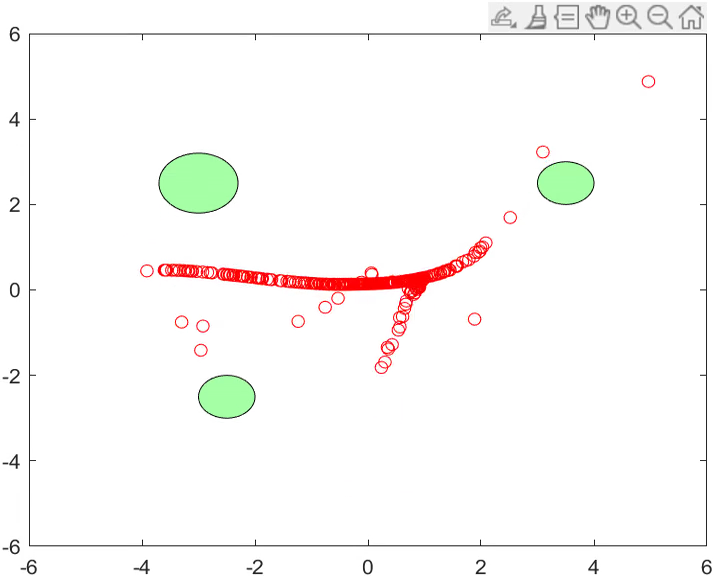}};
  \node[above of= img1, node distance=-0.5cm, yshift=-2.1cm,font=\color{black}]  {Final density}; 
\end{tikzpicture}
\end{multicols}
\vspace*{-2em}
\caption{Unicycle robots navigating a non-convex environment to reach the standard Gaussian.}
\label{fig:unicycle-obst}
\end{figure}

\subsection{Related work}

\textbf{Diffusion based planning.} While there has been some work on using DDPMs for planning and control, existing approaches are conceptually different from the way we exploit DDPMs. For example, \cite{janner2022planning} proposes using DDPM to construct a generative model of a system as a probability density on the set of trajectories on the state space. The work \cite{chi2023diffusion} improves on this by learning the action policy instead of the joint density on the action and state space, thus lowering the dimensionallity of the problem. Several extensions of diffusion based planning have been proposed for long horizon planning \cite{li2023hierarchical}, learning cost functions for robotic arms \cite{urain2023se}, multi-task reinforment learning \cite{he2023diffusion} and imitation learning \cite{hegde2023generating,xian2023chaineddiffuser}.  A drawback of the previous approaches is that if $A$ and $S$ represent the action space and the state space, respectively, the noising process is performed on the entire trajectory space $A^N$, $S^N$ or $A^N \times  S^N$, where $N$ is the control horizon. Differently from these approaches, we perform the noising process in the state space $S$ by treating the control system as the reverse or densoising process itself. This idea leads to a massive reduction in the dimensionality of the problem and a drastic increase in the applicability and performance of the proposed method. 

\textbf{Probability density control.} The problem of control design from the perspective of generative modeling also arises in the control theory literature in the context of probabilistic density control. These works include control design using the Perron-Frobenius operators \cite{vaidya2010nonlinear,buzhardt2023controlled}, convexication of control problems through probabilistic lifting \cite{hernandez1996linear,majumdar2014convex}, optimal transport of controlled dynamical systems \cite{agrachev2009optimal,hindawi2011mass,chen2016optimal,elamvazhuthi2023dynamical}, and probabilistic control design \cite{karny1996towards}. When compared to DDPMs and our proposed control strategy, these methods have the same drawbacks as continuous normalizing flows and  transport based methods for generative modeling.


\subsection{Contributions}
The main contributions of this paper are as follows. First, we propose a novel DDPM-based approach to design feedback controllers for nonlinear systems, that is, to transfer in finite time the system from an initial density of initial states to a desired density of target states. This introduces a novel generative modeling approach to control design. Second, we provide theoretical guarantees for the feasibility and effectiveness of the proposed method. In particular, for control affine systems without drift, we prove the existence of a controller that tracks exactly the trajectory of the density of the system states generated by the stochastic forward process (see Theorem \ref{thm:tackdens})). Further, we show how such controller can be designed by training a neural network. Third, we address the computational issues due to the fact that, in practice, the trajectory of the density of the state can only be measured at finite set of times and only estimated using a finite set of state samples. Finally, we validate our method for controller design through a number of examples. We conduct further experiments to demonstrate the ability of our DDPM-based control approach for cases beyond our theoretical results.







\section{Problem setting and preliminary notions}
Consider the driftless, affine, nonlinear control system
\begin{align}
\label{eq:ctrsys}
    \dot x = g(x,u) = \sum_{i=1}^m  g_i(x) u_i,  
\end{align}
where $x \in \mathbb{R}^d$ denotes the state  of the system, $u_i$ the $i$-th control input, and $g_i \in C^{\infty}(\mathbb{R}^d;\mathbb{R}^d)$ are smooth vector fields. The model \eqref{eq:ctrsys} is commonly used in robotics to describe the dynamics of ground vehicles, underwater robots, manipulators, and other non-holonomic systems~\cite{JCL:2012}. In this paper we aim to solve a finite-time density control problem, that is, to find a control policy $u = \pi(t,x)$ such that the density of the state $x(T)$ is $p_\text{target}$ when the density of the initial state is $p_\text{initial}$, for a given control horizon $T \in \mathbb{R}_{>0}$. To this aim, we propose a methodology that relies on recent DDPMs, where the control policy is given by a neural network whose output is a control policy that allows the system \eqref{eq:ctrsys} to track the trajectories generated by reference stochastic diffusion process that ensures $x(T) \sim p_\text{target}$. To further clarify our approach, we next review denoising diffusion models through the lens of stochastic differential equations as presented in \cite{song2020score}, and formally state the control problem of interest and its applications.

\subsection{Denoising Diffusion Probabilistic model}
\label{sec:bgrnd}
Denoising Diffusion Probabilistic Model (DDPM) is a generative modeling technique that learns to sample from an unknown data density $p_\text{target}$ by (i) transforming it into a known  density $p_\text{initial}$ from which one can easily sample (such density has been referred to as the \emph{noise density}), (ii) sampling from it, and finally (iii) reversing the transformation.  This is done using two stochastic differential equations. First, the {\it forward process} is a stochastic differential equation of the form
\begin{align}\label{eq:sde}
\textup{d} X^\text{f} = V(X^\text{f}) + \sqrt{2} \, \textup{d} W + \textup{d} Z,
\end{align}
where $X_0$ has density $p_\text{target}$. In \eqref{eq:sde}, $W$ denotes the standard Brownian motion and $Z$ is a stochastic process that ensures that the state remains confined to a domain of interest $\Omega$.\footnote{The normalization constant $\sqrt{2}$ simplifies Equation~\eqref{eq:fwdpdf}.} The vector field $\map{V}{\mathbb{R}^d}{\mathbb{R}^d}$ is chosen such that the density of the random variable $X$ converges to $p_\text{initial}$. For example, if $V = \nabla \log p_{\text{initial}}= \frac{\nabla p_{\text{initial}}}{p_{\text{initial}}}$ then $\lim_{t \rightarrow \infty} p_t = p_{\text{initial}}$ \cite{bakry2014analysis}. Such convergence is guaranteed by the Fokker Planck equation that governs the evolution of the density $p$ of the state  $X$ of~\eqref{eq:sde}: 
\begin{align}
\label{eq:fwdpdf}
\partial_t p = \Delta p - \text{div}(V(x) p) = \text{div}  ([\nabla \log p -V(x)] p) . 
\end{align}
where  $\Delta : = \sum_{i=1}^d \partial_{x^2_i}$ and $\text{div} : = \sum_{i=1}^d \partial_{x_i}$ denote the Laplacian and the  divergence operator, and $p_0 = p_\text{target}$.

 
When $\Omega$ is a strict subset of $\mathbb{R}^d$, this equation is additionally supplemented by a boundary condition, known as the {\it zero flux} boundary condition
\begin{equation}
\vec{n}(x)\cdot ( \nabla p _t(x) - V(x)) = 0 ~~~ \mbox{on} ~~~ \partial \Omega ,
\end{equation}
where $\vec{n}(x)$ is the unit vector normal to the boundary $\partial \Omega$ of the domain $\Omega$.
This boundary condition ensures that $\int_{\mathbb{R}^d} p_t(x)dx = 1$ for all $t\geq 0$. An advantage of considering the situation of bounded domain is that one can choose $V\equiv 0$ and the noise density can be taken to be the uniform density on $\Omega$. This property will be useful when solving our control problem.


The second part of the DDPM is the \emph{reverse process}, which aims
to transport the density from $p_\text{initial}$ back to $p_\text{target}$. There are multiple possible choices of the reverse process, including the {\it probabilistic flow ODE} given by
\begin{align}\label{eq: reverse process}
\textup{d}X^{\textup{r}} = -\nabla \log p_{T-t}(X^{\textup{r}})\textup{d}t + V (X^{\textup{r}})\textup{d}t, 
\end{align}
where $X^{\textup{r}}_0$ has density $p_\text{initial}$ and $T$ is the horizon of the reverse process. 
The evolution of the density $p^{\textup{r}}$ of the reverse process \eqref{eq: reverse process} is
\begin{align}
\partial_t p^{\textup{r}} = \text{div}  (-[\nabla \log p_{T-t} +V(x)] p^{\textup{r}}) .
\end{align}
In the ideal case, $p_{\text{initial}} = p_T$ and, $p_{T-t} = p^{\textup{r}}_t$ for all $t \in [0,T]$. 
After simulating the forward process, one can learn the \emph{score} $\nabla \log p_{T-t}$ to run the reverse process to effectively sample from $p_{\text{target}}$.
In practice, $p^{\textup{f}}_T \approx p_{\text{initial}}$, and hence $p^{\textup{r}}_T  \approx p_0 = p_{\text{initial}}$, and one does not have complete information about the score. Usually, a neural network $\texttt{NN}(t,x,\theta)$ is used to approximate the score by solving the optimization problem
\begin{align} 
\min_{\theta} & \int_0^T \mathbb{E}_{p_{T-t}} |\texttt{NN}(t,\cdot,\theta) -\nabla \log p_{T-t} |^2\textup{d}t 
\end{align}
This objective ensures that the solution $p^\theta_t$ of the equation 
\begin{align}
\partial_t p^\theta = \text{div}  ( [ \texttt{NN}(t,x,\theta) +V (x)] p^\theta),
\end{align}
where $p^\theta_0 = p_{\text{initial}}$,  is close to $p_t$ so that we can sample from $p_{\text{target}}$ by running the reverse ODE,
\begin{align} 
\textup{d}X^\theta = \texttt{NN}(t,X^\theta,\theta)-  V (X^\theta)\textup{d}t 
\end{align}
such that $X_0$ is sampled from $p_{\text{noise}}$. 
\subsection{DDPM Control Problem}
\begin{figure*}
\label{fig:control-density-probs}
\centering
  \includegraphics[width=0.95\textwidth]{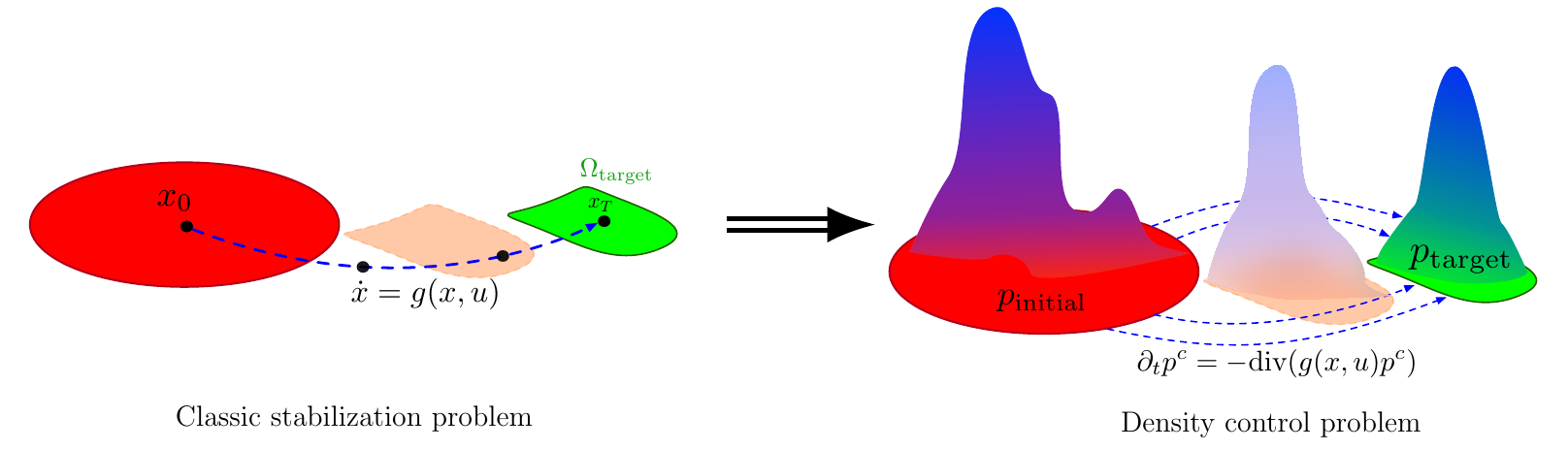}
\caption{Reformulation from the classical control problem to the density control problem. }
\end{figure*}
We now formally state the DDPM based feedback control problem. 
Toward this end, we will need a description of how the density of the  solution $x$ of equation \eqref{eq:ctrsys} which evolves. This is known to be given by the Liouville equation or the continuity equation \cite{santambrogio2015optimal},
   \begin{align}
\label{eq:fwdpdf_cs}
\partial_t p^{\textup{c}} = -\textup{div}(g(x,\pi(t,x)) p^{\textup{c}}), 
\end{align}
for a given initial density $p^{\textup{c}}_0 = p_0$.
\begin{problem}
Does there exist a feedback control policy $u = \pi(t,x)$, such that the density of the controlled state $p^{\textup{c}} = p_{\text{target}}$, and $p^{\textup{c}}_t = p^{\textup{f}}_{T-t}$ for all $t\in [0,T]$? If yes, identify such a feedback control policy $\pi(t,x)$. \label{prob:DDPM-ctrl-prob}
\end{problem}
The ideal control problem we are interested in is one of stabilizing the trajectories of the nonlinear system \eqref{eq:ctrsys} to a set of desired states, possibly the origin. Ideally, we are interested in designing a control policy $u = \pi(t,x)$ for \eqref{eq:ctrsys} such that the state $x$ reaches a predefined set $\Omega_{\text{target}} \subset \mathbb{R}^d$. This is a fundamental control problem with numerous applications in robotics, among other fields, that has received considerable attention over the years. The difficulty of such problem stems from the nonlinearity of the dynamics \eqref{eq:ctrsys}, which typically allow only for ad-hoc and often limited solution methods~\cite{AS:1985}. Inspired by the recent aforementioned successes of diffusion-based generative techniques, we reformulate the above deterministic control problem as a density control problem for \eqref{eq:ctrsys}. The density control formulation allows for probabilistic control of the population of states, instead of individual trajectories.
To be specific, let $p_\text{initial}$ be the density of possible initial states of the system, and let $p_\text{target}$ be the density of the desired final states. Let $p^{\textup{c}}_t$ be the density at time $t$ of the state of \eqref{eq:ctrsys} when $p^{\textup{c}}_0= p_\text{initial}$, and let $p_\text{desired}$ be a desired trajectory of the density of the state such that $p_\text{desired}= p_{\text{target}}$. Then, the DDPM control problem considered in this paper is to design a control policy $u = \pi(t,x)$ such that $p^{\textup{c}} = p_\text{desired}$. Figure~\ref{fig:control-density-probs} illustrates the reformulation of the two problems.

We conclude this section with some remarks. First, the DDPM control problem includes the deterministic stabilization problem as a special case. This can be seen by selecting $p^\text{desired}$ such that it is supported only on the domain of interest. Further, the classic stabilization problem is also a special case of the DDPM control problem, as the support of $p_\text{target}$ can be chosen as a singleton set $\Omega_\text{target}=\{x_\text{target}\}$. Second, the connection between the DDPM technique and the DDPM control problem is evident, as we can select the desired density trajectory $p^\text{desired}_t$ as the (time-reversed) trajectory generated by the forward process \eqref{eq:fwdpdf}. The nonlinear system~\eqref{eq:ctrsys} with feedback policy $\pi(t,x)$ then acts as the reverse process of the DDPM. Hence, our goal is to design the feedback policy such that the reverse policy tracks the (time-reversed) forward process. It is important to note that this control problem is much harder to solve than the DDPM problem for generative modeling. The control policy $\pi$ plays the role of the score, however there might not be a control policy $\pi(t,x)$ that realizes the score $\nabla \log p^{\textup{f}}_t(x)$ since not all directions of the state space might be admissible (this is the case in underactuated systems~\cite{FB-NEL-ADL:2000}).

\section{Algorithms for DDPM-based control }

In this section, we provide a systematic procedure to solve the DDPM control problem and discuss theoretical aspects of learning the controller $\pi(t,x)$ such that the density of the forward process can be tracked by states of the nonlinear system in reverse.
Our DDPM based approach to address the probability density control problem is the following. Adapting the methodology introduced in Section \ref{sec:bgrnd}, we take \eqref{eq:ctrsys} to be the reverse process, retaining \eqref{eq:sde} as the forward process. The forward process provides a reference trajectory $p^{\textup{f}}_{T-t}$ in the set of probability densities such that $p^{\textup{f}}_T \approx p_{\text{initial}}$ and $p_0 = p_{\text{target}}$. Thus if there exists a controller $\pi(t,x)$ that can ensure system \eqref{eq:ctrsys} exactly tracks this density trajectory, we will have that $p_T^{\textup{c}} \approx p_{\text{target}}$. 


To identify the controller $\pi(t, x)$, we seek to the following minimization problem:
\begin{align}
\min_{\pi} \frac{1}{T}\int\limits_0^T \text{KL}(p^{\textup{c}} \big|p^{\textup{f}}_{T-t}) \ \textup{d}t. \label{eqn:cost-continuous}
\end{align} 
 subject to the dynamics \begin{align}\partial_t p^{\textup{c}} = -\text{div} (g(x,\texttt{NN}(t,x,\theta)) p^{\textup{c}}),
\end{align} 
where $p^{\textup{c}}_0 = p_{\text{initial}}$. 
Here $\text{KL}(p^{\textup{c}}\big|p^{\textup{f}})$ denotes the KL divergence between the density of the control system and the forward density. The KL divergence between any two densities $Q$ and $R$ defined on a set $\Omega$ is given by
\begin{align}
\text{KL}(Q\big|R) &= \int\limits_{x \in \Omega} Q(x) \log\left(\frac{Q(x)}{R(x)} \right) ~\textup{d}x.
\end{align}
Note that the cost function in problem~\eqref{eqn:cost-continuous} is ideal as it considers the KL-divergence over the whole horizon $[0,T]$ over the entire domain $\Omega$. In practice, the states of the system can only be measured at specific time instances. Hence, we can only estimate the KL divergence at specific instances of the horizon using finite samples from the domain. Therefore, we solve the following more practical problem:
\begin{align}
\min_{\pi} \frac{1}{N}\sum_{i=1}^N \widehat{\text{KL}}(p^{\textup{c}}_{t_i}\big|p^{\textup{f}}_{T-t_i}).\label{eqn:approx-prob}
\end{align}
Here, $\widehat{\text{KL}}$ denotes the estimated KL divergence between $p^{\textup{c}}_{t}$ and $p^{\textup{f}}_{T-t}$, based on their respective samples. The time instances when the state is measured is given by $0\leq t_1 < t_2 < \dots < t_N = T$. Further, we use $M$ samples to estimate the KL-divergence and the samples are constrained to the dynamics~\eqref{eq:ctrsys} in the reverse process. We provide a method to numerically estimate the KL divergence directly using $M$ samples in Section~\ref{sec:KLD}. The procedure to solve problem~\eqref{eqn:approx-prob} is given in Algorithm~\ref{algo:DDPM-FL}.
\begin{algorithm}[tbh] 
\caption{Learning the controller $\pi$}
\begin{algorithmic}\label{algo:DDPM-FL}
\STATE\textbf{Data:} $M$ samples from $p_\text{target}$, System~\eqref{eq:ctrsys}. \\ 
\STATE\textbf{Initialize:} Time instances for measuring state: $\{t_i\}_{i=1}^N$, neural network $\texttt{NN}(t,x,\theta)$, number of training epochs $E$. \\
\STATE 1. Run forward process for $M$ samples for horizon $[0,T]$:
\vspace*{-0.4em}
$$
\textup{d} X^{\textup{f}} = V(X^{\textup{f}}) + \sqrt{2} \, \textup{d} W + \textup{d} Z. $$ 
\STATE \vspace*{-0.1em} 2. Measure $X^{\textup{f}}$ for all $M$ samples at instances $\{t_i\}_{i=1}^N$.
\STATE 3. \textbf{For} $k = 1:E$, \{
\STATE 4. \quad Run reverse process for $M$ samples as:
\vspace*{-0.4em}
$$\dot{x} = g(x,\texttt{NN}(t,x,\theta)).$$
\STATE \vspace*{-0.2em} 5. \quad Measure state $x$ at time instances $\{t_i\}_{i=1}^N$.
\STATE 6. \quad Estimate KL-divergence $\widehat{\text{KL}}(p^{\textup{c}}_{t_i}|p^{\textup{f}}_{T-t_i})$ for all $t_i$.
\STATE 7. \quad Optimize controller $\texttt{NN}_{\theta}$ by minimizing cost~\eqref{eqn:approx-prob}.\ \ \}\\
\end{algorithmic}
\end{algorithm}
\begin{remark}{\textbf{(Practical aspects of Algorithm~\ref{algo:DDPM-FL})}} Firstly, the choice of $V(x)$ in the forward process determines the final density of the $M$ data samples. Particularly, a choice of $V(x) = 0$ ensures that $p_{\textup{initial}}$ is the uniform distribution in the domain $\Omega$. The choice $V(x) = - k x$, ensures $p_{\textup{initial}}$ is the standard Gaussian distribution if $T$ is sufficiently large. Here, $k$ is a positive scalar. In this work, we will particularly focus on the case where $V \equiv 0$, that is, we seek to uniformly sample from everywhere in the domain and reach $p_{\textup{target}}$ in the reverse process. Secondly, scaling the noise $\textup{d}W$ in the forward process~\eqref{eq:fwdpdf} determines how fast $p_\textup{initial}$ is reached. 
Conversely, the noise scale determines the control effort $\|\pi(x,t)\|$ in the reverse process. For smoother and lower control effort, it is essential to use smaller noise scales. Lastly, given the choice of $V(x)$ and the noise scale, the length of the horizon $[0,T]$ determines whether the forward process reaches $p_\textup{initial}$ and whether the dynamical system can reach $p_\textup{target}$. Dynamical systems with actuator constraints require larger $T$ and smaller noise scale.
\end{remark}

\begin{remark}{\textbf{(Choices of different cost functions for learning controller $\pi$)}}. The KL divergence serves as a good metric in the cost~\eqref{eqn:approx-prob} to learn the controller $\pi$. The KL-divergence can be approximated in a numerically convenient method as shown in Section~\ref{sec:KLD}. However, we are not restricted to using the KL-divergence based cost function. The Wassertein distance is particularly difficult to compute in high dimensions. Another intuitive choice of the cost function could be the difference between moments of the distributions $p^{\textup{c}}_{t_i}$ and $p^{\textup{f}}_{T-t_i}$. Although estimating moments is numerically efficient, different densities can possess the same moments and hence, may not serve as a good cost function generally. 
\end{remark}

\section{Theoretical Analysis and Results}
\label{sec:theres}



A natural question that arises from a theoretical point of view is the well-posedness of this optimization problem~\eqref{eqn:cost-continuous}. Particularly, the nonlinear dynamics of system~\eqref{eq:ctrsys} play a crucial role in the feasibility of problem~\eqref{eqn:cost-continuous}. In typical DDPM problems for generative modeling, noise can be added in all directions for both the forward and reverse processes. Therefore, score-matching techniques can be employed to learn the reverse process. However, for the feedback control problem in consideration, if the dynamical system is not fully actuated, that is $d=m$, then the feasibility of problem~\eqref{eqn:cost-continuous} is not clear. Here, we address the case when such a controller can be obtained. We reserve a formal theoretical result for Section~\ref{sec:theres}.

For system~\eqref{eq:ctrsys}, consider $\map{g_i}{\Omega}{\mathbb{R}^d}$, and an observable of the system $\map{h}{\Omega}{\mathbb{R}}$. With input $u_i \neq 0$ and $u_j = 0$ for all $j\neq i$, the time derivative of the function $h$ can be represented in terms of a differential operator $Y_i$ as,
\begin{align}
\dot{h}(x(t)) = Y_i h(x(t) &= \sum\limits_{j=1}^d g_{i}^{j}(x(t)) \ \partial_{x_j} h(x(t)),
\end{align}
where $g^j_i$ represents the $j^{th}$ element of $g_i(x)$. The differential operator $Y_i$ has an associated adjoint $Y_i^*$~\cite{AL-MMC:2013} which defines the evolution of the density, allowing us to rewrite Equation \eqref{eq:fwdpdf_cs} as 
\begin{align}
\label{eq:ctctty}
\partial_t p^{\textup{c}}  = \sum_{i=1}^m  Y_i^*(u_i p^{\textup{c}}) = \sum\limits_{j=1}^m \sum\limits_{i=1}^d-\partial_{x_j} (u_i g_{i}^j(x)\ p^{\textup{c}}).  
\end{align}
We will show in this section  the existence of a controller $\pi(t,x)$ that ensures that the system tracks the forward process in reverse is contingent on the condition that the operator $\sum_{i=1}^m Y_i^* Y_i$ is invertible.  Moreover, the invertibility of the operator is a condition on the controllability of system~\eqref{eq:ctrsys}. With full knowledge of the system and the densities $p_\text{initial}$ and $p_\text{target}$, such a controller can be identified.

In order to state our main result, we will need to define some mathematical notions that will be used in this section.  We define $L^2(\Omega)$ as the space of square integrable functions over $\Omega$, where $\Omega \subset \mathbb{R}^d$ is an open, bounded and connected subset. The set of continuous functions $p$ for which $p_t$ lies in $L^2(\Omega)$ will be referred to using $C([0,1];L^2(\Omega))$. Let $\mathcal{P}_2(\mathbb{R}^d)$ denote the set of Borel probability measures on $\mathbb{R}^d$ with finite second moment: $\int_{\Omega} |x|^2d\mu(x)~<~\infty$. For a given Borel map $T : \mathbb{R}^d \rightarrow  \mathbb{R}^d$ we will denote by $T_{\#}$ the corresponding pushforward map, which maps any measure $\mu $ to a measure $T_{\#}\mu$, where $T_{\#}\mu$ is the measure defined by
	\begin{equation}
	(T_\# \mu)(B) = \mu(T^{-1}(B)), 
	\end{equation}
	for all Borel measurable sets $B \subseteq \mathbb{R}^d$. For $\mu,\nu \in \mathcal{P}_2(\mathbb{R}^d)$, we denote the set of transport plans from $\mu$ to $\nu$ by
\begin{equation}
\Gamma(\mu,\nu):=\{\gamma \in \mathcal{P}(\mathbb{R}^d \times \mathbb{R}^d) | \pi^1_{\#}\gamma = \mu, \pi^2_{\#}\gamma = \nu \},
\end{equation}
where $\pi^i:\mathbb{R}^d \times \mathbb{R}^d \rightarrow \mathbb{R}^d$ are the projections on to the $i$th coordinates, respectively.
We will define the $2-$Wasserstein distance between two probability measures $\mu,\nu$
as the following 
\begin{equation}
W_2(\mu,\nu) = \min_{\gamma \in \Gamma(\mu,\nu)} \Bigg (\int_{\mathbb{R}^d \times \mathbb{R}^d} |x-y|^2d\gamma(x,y)\Bigg )^{1/2}.
\end{equation}

	Let $\mathcal{V} = \lbrace g_1,...,g_m \rbrace$, $m \leq d$, be a collection of smooth vector fields $g_i : \mathbb{R}^d \rightarrow \mathbb{R}^d$. Let $[f,g]$ denote the Lie bracket operation between two vector fields $f: \mathbb{R}^d \rightarrow \mathbb{R}^d$ and $g: \mathbb{R}^d \rightarrow \mathbb{R}^d$, given by
where $\partial_{i}$ denotes partial derivative with respect to coordinate $i$.
 \begin{equation}
 [f,g]_i = \sum_{j=1}^d f^j \partial_{x_j} g^i - g^j \partial_{x_j} f^i.
 \end{equation}
 
 We define $\mathcal{V}^0 =\mathcal{V}$. For each $i \in \mathbb{Z}_+$, we define in an iterative manner the set of vector fields $\mathcal{V}^i = \lbrace [g, h]; ~g \in \mathcal{V}, ~h \in \mathcal{V}^{j-1}, ~j=1,...,i \rbrace$.  We will assume that the collection of vector fields $\mathcal{V}$ satisfies following condition the {\it Chow-Rashevsky} condition \cite{agrachev2019comprehensive} (also known as {\it H\"{o}rmander's condition} \cite{bramanti2014invitation})
 
 \begin{assumption}
 \label{asmp2}
 \textbf{(Controllability})
     The Lie algebra generated by the vector fields $\mathcal{V}$, given by $\cup_{i =0}^r \mathcal{V}^i$, has rank $N$, for sufficiently large $r$.
 \end{assumption} 

We will need another assumption on the regularity of $\Omega$. Towards this end an {\it admissible curve} $\gamma:[0,1] \rightarrow \Omega $ connecting two points $\x ,\y \in \Omega$ is a Lipschitz curve in $\Omega$ for which there exist essentially bounded functions $u^i:[0,T]\rightarrow \mathbb{R}$ such that $\gamma$ is a solution of \eqref{eq:ctrsys} with $\gamma(0)=x$ and $\gamma(1) = y$.

\begin{definition}
 \label{def:epsdel}
	The domain $\Omega \subset \mathbb{R}^d$ is said to be  \textbf{non-characteristic} if for every $x \in \partial \Omega$, there exists a admissible curve $\gamma(t)$ such $\gamma(0) = x$ and $\gamma(t)$ is not tangential to $\partial \Omega$ at $x$.
\end{definition}
This definition imposes a regularity on the domain $\Omega$, which will be needed to apply the results of \cite{elamvazhuthi2023density} to conclude the invertibility of the operator $\sum_{i=1}^m Y_i^*Y_i$.

\begin{assumption}
\label{asmp1}
\textbf{(Boundary regularity)}
We will make the following assumption on the boundary of the domain $\Omega$.
\begin{enumerate}
\item The domain $\Omega$ has a $C^2$ boundary $\partial \Omega$.
\item The domain $\Omega$ is non-characteristic in the sense of Definition \ref{def:epsdel}.
\end{enumerate}
\end{assumption}

We will say that $p \in L^2(\Omega)$ is a probability density, if $\int_{\Omega}p(x)dx =1 $ and $p$ is non-negative almost everywhere on $\Omega$.

Given these definitions we will show that we can a find a feedback controller $u_i = \pi_i(t,x)$ such that the dynamical system tracks the forward process in reverse. 

First, we have the following result that the control system can track any arbitrary trajectory on the set of probability densities, provided it satisfies some smoothness assumption. The complete proof is provided in Appendix \ref{sec:app}.
\begin{lemma}[\textbf{Exact tracking of positive densities}]
\label{lem:extrafull}
Given Assumption \ref{asmp2} and \ref{asmp1}, suppose $p^{\text{ref}} \in C([0,T];L^2(\Omega))$, $\partial_t p^{\text{ref}} \in C([0,T];L^2(\Omega))$, $ p^{\text{ref}}_t $ is a probability density and  $  p_t > 0$,  for all $t\geq 0$. 
 
 Then there exists a control law $\pi^i$ such that a solution $p^{\textup{c}}$ of the \eqref{eq:ctctty} satisfies 
\[p^{\textup{c}} = p^{\text{ref}}_t \text{ for all } t\in [0,T]\] 
\end{lemma}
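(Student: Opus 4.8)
The plan is to exploit the divergence structure of the controlled continuity equation \eqref{eq:ctctty} and reduce the tracking problem to solving a single sub-elliptic boundary value problem at each time instant. Motivated by the form of \eqref{eq:ctctty}, I would seek a controller of the form $\pi_i(t,x) = Y_i \phi(t,x) / p^{\text{ref}}_t(x)$, where $\phi(t,\cdot) : \Omega \to \R$ is an unknown potential to be determined; the division is legitimate precisely because $p^{\text{ref}}_t > 0$ on $\Omega$. Substituting this ansatz into \eqref{eq:ctctty} together with the candidate $p^{\textup{c}} = p^{\text{ref}}$ gives $u_i p^{\textup{c}} = Y_i \phi$, so the equation collapses to
\begin{equation}
\partial_t p^{\text{ref}}_t = \sum_{i=1}^m Y_i^* Y_i \, \phi(t,\cdot) =: \mathcal{L}\,\phi(t,\cdot).
\end{equation}
Thus it suffices to solve $\mathcal{L}\,\phi(t,\cdot) = \partial_t p^{\text{ref}}_t$ for each fixed $t \in [0,T]$, where $\mathcal{L} = \sum_{i=1}^m Y_i^* Y_i$ is the self-adjoint, positive semidefinite ``sum of squares'' operator built from the control vector fields (together with the zero-flux boundary condition consistent with the density dynamics).

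The second step is to identify the correct solvability framework. Since $\langle \mathcal{L}\phi,\phi\rangle = \sum_i \|Y_i\phi\|^2$, the kernel of $\mathcal{L}$ consists of functions annihilated by every $Y_i$, i.e.\ functions constant along all $g_i$ directions; by Assumption \ref{asmp2} (Chow--Rashevsky) on the connected domain $\Omega$, any two points are joined by an admissible curve, so such functions are globally constant and $\ker\mathcal{L}$ is exactly the constants. Hence $\mathcal{L}$ is not invertible on all of $L^2(\Omega)$, but one expects it to be an isomorphism from the zero-mean subspace onto the zero-mean subspace. The required compatibility condition that the right-hand side be zero-mean holds automatically: because $p^{\text{ref}}_t$ is a probability density for every $t$, we have $\int_\Omega \partial_t p^{\text{ref}}_t \, dx = \partial_t \int_\Omega p^{\text{ref}}_t \, dx = 0$, so $\partial_t p^{\text{ref}}_t$ lies in the range of $\mathcal{L}$.

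The main obstacle, and the technical heart of the argument, is establishing the invertibility of $\mathcal{L}$ on the zero-mean subspace with a uniform bound. This is exactly where the bracket-generating condition does its work: Assumption \ref{asmp2} makes $\mathcal{L}$ hypoelliptic, and combined with the $C^2$, non-characteristic boundary regularity of Assumption \ref{asmp1}, it yields the sub-elliptic a priori estimates and Poincar\'e-type inequality needed to close the argument. Rather than reproving this sub-elliptic regularity theory, I would invoke \cite{elamvazhuthi2023density} directly to obtain the invertibility of $\mathcal{L}$ and the well-posedness of the associated boundary value problem, producing for each $t$ a unique zero-mean solution $\phi(t,\cdot)$ with $\|\phi(t,\cdot)\| \le C \,\|\partial_t p^{\text{ref}}_t\|$ and with enough regularity that $Y_i\phi$ makes sense.

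Finally, I would verify that the construction yields an admissible time-dependent controller and that it closes the loop. Linearity of $\mathcal{L}^{-1}$ together with the hypothesis $\partial_t p^{\text{ref}} \in C([0,T];L^2(\Omega))$ transfers continuity in time to $t \mapsto \phi(t,\cdot)$, while the strict positivity $p^{\text{ref}}_t > 0$ guarantees that $\pi_i(t,x) = Y_i\phi(t,x)/p^{\text{ref}}_t(x)$ is well defined. Substituting this $\pi_i$ back into \eqref{eq:ctctty} and reversing the algebra of the first step shows that $p^{\text{ref}}$ solves the controlled continuity equation with datum $p^{\text{ref}}_0$, whence by uniqueness $p^{\textup{c}} = p^{\text{ref}}_t$ for all $t \in [0,T]$, as claimed. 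The remaining care is purely in matching function-space regularity so that the continuity equation holds in the appropriate weak sense, which is again supplied by the sub-elliptic estimates cited above.
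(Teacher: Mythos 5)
Your proposal follows essentially the same route as the paper: the ansatz $\pi_i = Y_i\phi/p^{\text{ref}}$ reduces tracking to the nonholonomic Poisson equation $\sum_i Y_i^* Y_i\,\phi = \partial_t p^{\text{ref}}_t$, whose solvability on the zero-mean subspace is exactly the paper's Proposition \ref{prop:invA} (proved via the spectral gap from \cite{elamvazhuthi2023density} and Lax--Milgram), and you correctly supply the compatibility condition $\int_\Omega \partial_t p^{\text{ref}}_t\,dx = 0$. Your final appeal to uniqueness of solutions of the continuity equation is unnecessary (and delicate for merely $L^2$ velocity fields): the lemma only asserts that \emph{a} solution tracks $p^{\text{ref}}$, which your construction already exhibits.
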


Given the previous lemma on tracking probability densities, we state the following theorem on tracking the reversed trajectories of the forward process.

\begin{theorem}[\textbf{Feedback control by tracking reverse process}]
\label{thm:tackdens}
Given Assumption \ref{asmp2} and \ref{asmp1}, suppose $p_{\text{target}}  \in L^2(\Omega)$ is a probability density. Let $V \equiv 0$. Then there exist control laws $\pi^i(t,x)$  and a solution of \eqref{eq:ctctty} that exactly tracks the solution of the forward process: $p^{\textup{c}}_t = p^{\textup{f}}_{T-t}$ for all $t \in [0,T)$. Moreover, we have,
\[ \lim_{t \rightarrow T^{-}} \|p^{\textup{c}}_t- p_{\text{target}}\|_2 = 0 \]
Suppose, instead $p_{\text{target}} = \delta_{x_0}$ for some $x_0 \in \Omega$. Then
		\[ \lim_{t \rightarrow T^{-}}  W_2(p^{\textup{c}}_t, p_{\text{target}}) = 0 \]
\end{theorem}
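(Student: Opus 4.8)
The plan is to feed the time-reversed forward trajectory into Lemma \ref{lem:extrafull} as the reference density, and then to analyze separately the single time $t = T$ at which the hypotheses of that lemma break down. With $V \equiv 0$, the forward process \eqref{eq:sde} has Fokker--Planck equation $\partial_t p^{\textup{f}} = \Delta p^{\textup{f}}$ on $\Omega$ subject to the zero-flux (homogeneous Neumann) boundary condition $\vec{n}\cdot\nabla p^{\textup{f}}_t = 0$, with initial datum $p^{\textup{f}}_0 = p_{\text{target}}$. Writing $s = T - t$ for the reverse time, I set $p^{\text{ref}}_t := p^{\textup{f}}_{T-t}$ and aim to apply Lemma \ref{lem:extrafull} on the half-open interval $[0,T)$, on which $s = T-t$ ranges over $(0,T]$, i.e. over strictly positive heat times only.

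First I would verify that the reference meets the regularity and strict-positivity hypotheses of Lemma \ref{lem:extrafull} on $[0,T)$, which is exactly where the Neumann heat semigroup $e^{s\Delta}$ does the work. For any probability density $p_{\text{target}} \in L^2(\Omega)$ and any $s > 0$, parabolic smoothing gives $p^{\textup{f}}_s \in C^\infty$, with $\partial_s p^{\textup{f}}_s = \Delta p^{\textup{f}}_s$ again smooth and continuous in $s$, so $p^{\text{ref}}, \partial_t p^{\text{ref}} \in C([0,T-\epsilon];L^2(\Omega))$ for every $\epsilon > 0$. Since $\Omega$ is bounded, connected, and has $C^2$ boundary (Assumption \ref{asmp1}), the strong maximum principle together with Hopf's lemma make the semigroup positivity-improving, so $p^{\textup{f}}_s > 0$ throughout $\Omega$ for every $s > 0$. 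Hence on each $[0,T-\epsilon]$ the reference satisfies all hypotheses of Lemma \ref{lem:extrafull}, and the lemma produces control laws $\pi^i(t,x)$ (of the form $\pi_i = (Y_i\phi)/p^{\text{ref}}$ with $\phi = (\sum_{i} Y_i^* Y_i)^{-1}\partial_t p^{\text{ref}}$, using invertibility of $\sum_{i} Y_i^* Y_i$ under Assumptions \ref{asmp2}--\ref{asmp1}) whose closed-loop density satisfies $p^{\textup{c}}_t = p^{\text{ref}}_t$. As this controller depends only on $p^{\text{ref}}_t$ and $\partial_t p^{\text{ref}}_t$ at the instant $t$, it is the same map for every $\epsilon$ and therefore a single well-defined feedback on $[0,T)$, giving the exact tracking $p^{\textup{c}}_t = p^{\textup{f}}_{T-t}$ for all $t \in [0,T)$.

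It then remains to pass to the limit $t \to T^-$, i.e. $s \to 0^+$. When $p_{\text{target}} \in L^2(\Omega)$, I would invoke strong continuity of the Neumann heat semigroup on $L^2(\Omega)$ at $s=0$, which yields $\|p^{\textup{c}}_t - p_{\text{target}}\|_2 = \|e^{(T-t)\Delta}p_{\text{target}} - p_{\text{target}}\|_2 \to 0$. In the Dirac case $p_{\text{target}} = \delta_{x_0}$, the density $p^{\textup{f}}_s$ for $s>0$ is the Neumann heat kernel based at $x_0$, still smooth and positive, so the construction above applies verbatim on $[0,T)$; for the endpoint I would use that transport to a point mass is explicit, $W_2(p^{\textup{f}}_s,\delta_{x_0})^2 = \int_\Omega |x-x_0|^2\,p^{\textup{f}}_s(x)\,dx$, and that the heat kernel concentrates at $x_0$ with second moment of order $O(s)$, so the right-hand side tends to $0$ as $s \to 0^+$.

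The main obstacle is precisely the degeneration of the reference at the single endpoint $t=T$, where $p^{\textup{f}}_0 = p_{\text{target}}$ is merely $L^2$ (or even a Dirac mass) and generically neither smooth nor strictly positive, so Lemma \ref{lem:extrafull} cannot be applied there and the feedback $\pi_i = (Y_i\phi)/p^{\text{ref}}$ may blow up as $t \to T^-$. The resolution rests entirely on the smoothing and positivity-improving properties of the Neumann heat semigroup on a bounded connected domain, which restore both hypotheses for every $s>0$; the remaining care is only to confirm that these gains hold uniformly on compact subintervals so that the lemma applies, and that the endpoint convergence is measured in the metric ($L^2$ versus $W_2$) appropriate to the integrability of $p_{\text{target}}$.
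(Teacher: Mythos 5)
Your proposal is correct, and its overall architecture matches the paper's: establish two-sided bounds and regularity of $p^{\textup{f}}_s$ for $s>0$, invoke the exact-tracking lemma away from the endpoint, and handle $t\to T^-$ by a separate limiting argument ($L^2$ semigroup continuity in the density case, Wasserstein concentration in the Dirac case). The technical underpinnings differ in two places, though. For strict positivity, you use the strong maximum principle plus Hopf's lemma for the Neumann problem, whereas the paper argues through semigroup theory: $L^2$--$L^\infty$ contractivity, irreducibility, and $\T(t)\mathbf{1}=\mathbf{1}$, citing a result of Gl\"uck to upgrade irreducibility to $L^\infty(\Omega)$ and obtain the uniform essential lower bound $c_t>0$. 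The semigroup route works directly in the a.e.\ framework the tracking lemma needs; your classical route is more elementary but requires enough boundary regularity for Hopf's lemma and continuity of $p_s$ up to $\overline{\Omega}$ before compactness yields a positive essential infimum --- do state that explicitly, since pointwise positivity on the open set $\Omega$ alone does not bound $\operatorname{ess\,inf} p_s$ away from zero, and beware that your claim of $C^\infty$ smoothness up to a merely $C^2$ boundary is stronger than what holds (and more than is needed: analyticity of the semigroup already gives $p^{\text{ref}},\partial_t p^{\text{ref}}\in C([0,T-\epsilon];L^2(\Omega))$). For the Dirac case, your explicit computation $W_2(p^{\textup{f}}_s,\delta_{x_0})^2=\int_\Omega |x-x_0|^2 p^{\textup{f}}_s(x)\,dx = O(s)$ via Gaussian kernel bounds is a concrete and arguably cleaner alternative to the paper's citation of the absolute continuity of the curve in Wasserstein space (Santambrogio, Prop.\ 8.10) combined with Davies' heat-kernel bound, which the paper uses both to get $p_s\in L^2(\Omega)$ for $s>0$ and to conclude the $W_2$ limit; you still need the kernel bound (or $L^2$--$L^\infty$ smoothing) for the first of these, which your ``applies verbatim'' step implicitly uses. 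Your observation that the feedback $\pi_i=(Y_i\phi_t)/p^{\text{ref}}_t$ is defined instantaneously and hence consistent across all subintervals $[0,T-\epsilon]$ is a nice point that the paper leaves implicit.
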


\section{Numerical Studies}
In this section, we elucidate the KL divergence approximation method and validate the DDPM based feedback control approach using numerical results.
\subsection{KL-divergence approximation}
\label{sec:KLD}
The density of the forward process $p^{\textup{f}}_t$ is approximated by simulating the $M$-particle system~\eqref{eq:sde} with $V(x)=0$. This yields an approximation of the density $p^{\textup{f}}_t\approx \frac{1}{N}\sum_{j=1}^N \delta_{X^j_t})$. Similarly, the density of the reverse process is given by simulating the control system~\eqref{eq:ctrsys}. 

In order to compute the KL divergence between densities, we use the kernel approximation of the densities by regularizing the KL divergence. This method of regularizing the KL divergence for computing KL divergence between particles is due to \cite{carrillo2019blob}. Here, a method for regularizing class of functionals on probability densities was introduced to develop a particle method to simulate solutions of certain nonlinear PDEs. 
Suppose $K :\mathbb{R}^d \rightarrow \mathbb{R}_{\geq 0}$ is a kernel function such that $\int_{\mathbb{R}^d} K(x)dx = 1$. Using this function, we define $K_{\delta} = \frac{1}{\delta^d}K(\frac{x}{\delta})$. Suppose $Q = \frac{1}{M}\sum_{i=1}^M \delta_{x_i}$, and $R = \frac{1}{M}\sum_{i=1}^M \delta_{y_i}$.  We can approximate $Q$ and $R$ using its kernel density estimate $K * Q = \frac{1}{M}\sum_{j=1}^N K(x-x^j)$ and $K * R = \frac{1}{M}\sum_{j=1}^M K(x-y^j)$. The KL-divergence is then approximated as 
\begin{align}
\widehat{\text{KL}}(Q|R)\ &\approx \int \sum_i^M  \log\left(\frac{K_{\delta} * Q(x)}{K_{\delta}*R(x)}\right)R(x)\textup{d}x \\
& \approx \frac{1}{M} \sum_i^M \log \left(\frac{\sum_{j=1}^M K_{\delta}(x_i-x_j)}{\sum_{j=1}^M K_{\delta}(x_i-y_j)}\right)
\end{align}


\subsection{Experiments on dynamical systems}

We demonstrate the effectiveness of the proposed DDPM-based control method on nonlinear dynamical systems\footnote{\href{https://github.com/darshangm/diffusion-nonlinear-control/}{https://github.com/darshangm/diffusion-nonlinear-control}}. We characterize the effect of different parameters of the algorithm on the final KL-divergence between the $p_\text{target}$ and $p^{\textup{c}}$. We demonstrate our algorithm on three test beds.

\subsubsection{A five-dimensional bilinear system}
\begin{figure*}[tbh]
\begin{multicols}{3}
\hspace*{0.5cm}
\begin{tikzpicture}
  \node (img1)  {\includegraphics[width=0.2500\textwidth]{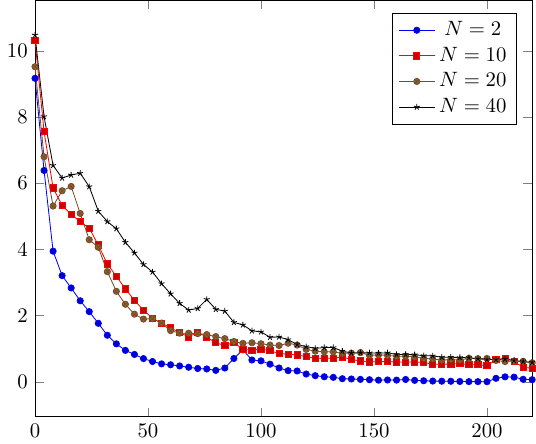}};
  \node[above of= img1, node distance=0cm, yshift=-2.2cm,font=\color{black}]  {Training iterations};  
  \node[above of= img1, node distance=0cm, yshift=-2.6cm,font=\color{black}]  {\small (a)};
  \node[left of= img1, node distance=0cm, rotate=90, anchor=center,yshift=2.6cm,font=\color{black}] { Final Estimated KL divergence };
\end{tikzpicture}\columnbreak
\hspace*{0.7cm}
\begin{tikzpicture}
  \node (img1)  {\includegraphics[width=0.2500\textwidth]{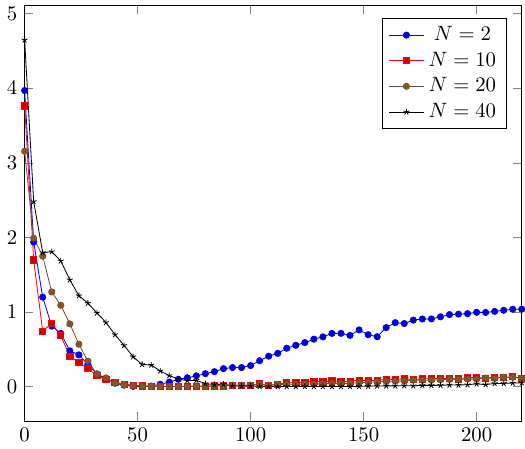}};
  \node[above of= img1, node distance=0cm, yshift=-2.2cm,font=\color{black}]  {Training iterations}; 
  \node[above of= img1, node distance=0cm, yshift=-2.6cm,font=\color{black}]  {(b)};
\end{tikzpicture}\columnbreak
\begin{tikzpicture}
  \node (img1)  {\includegraphics[width=0.2500\textwidth]{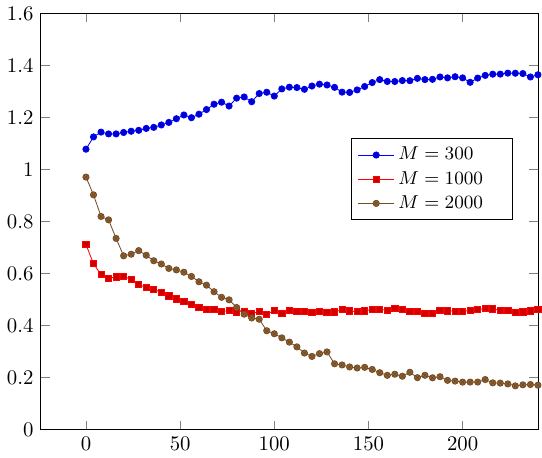}};
  \node[above of= img1, node distance=0cm, yshift=-2.2cm,font=\color{black}]  {Training iterations}; 
  \node[above of= img1, node distance=0cm, yshift=-2.6cm,font=\color{black}]  {(c)};
  \node[left of= img1, node distance=0cm, rotate=90, anchor=center,yshift=2.6cm,font=\color{black}] {Cost};
\end{tikzpicture}
\end{multicols}
\vspace*{-2em}
\caption{Experiments with a five-dimensional nonlinear system~\eqref{eqn:fived-sys}. (a) Final KL divergence for different number of measurement instances $N$ vs. training iterations with $M=2000$: validates Theorem~\ref{thm:tackdens} that neural network can ensure $p^{\textup{c}}=p_\text{target}$, (b) Final KL divergence vs. training iterations with $M=300$: shows that with fewer measurement instances, we need more training samples to approximate densities, (c) Cost \eqref{eqn:approx-prob} vs training iterations: depicts the sample complexity of the DDPM feedback control method. }
\label{fig:five-dim-sys}
\end{figure*}

We consider the following five-dimensional driftless system.
\begin{align}\label{eqn:fived-sys}
\begin{aligned}
\dot{x}_1 &= u_1, \ \ \dot{x}_2 = u_2, \ \ \dot{x_3} = x_2 u_1, \dot{x}_4 &= x_3 u_1, \ \ \dot{x}_5 = x_4 u_1
\end{aligned}
\end{align}
 We sample $M$ data points from a Gaussian distribution $\mathcal{N}(0,0.2I) = p_{\text{target}}$. With $V(x)=0$ for the forward process, $p_\text{initial}$ is the uniform distribution defined over the domain $\Omega= (-4,4)^5$. The neural network used to estimate the controller $\texttt{NN}(t,x,\theta)$ has four hidden layers with 150, 500, 100 and 10 neurons in the hidden layers respectively. We first demonstrate the effect of the number of training samples $M$ on the final estimated KL-divergence $\widehat{\text{KL}}(p^{\textup{c}}\big|p^{\textup{f}}_0)$. To test the final KL divergence, we apply the learned controller on 2000 uniformly sampled  points in $\Omega$.
 
 Figure~\ref{fig:five-dim-sys}(a) shows the final estimated KL divergence between the density of the control system at time $T$ for increasing number of training iterations for the controller. Here, we use $M=2000$ training samples.  Figure~\ref{fig:five-dim-sys}(a) validates Theorem~\ref{thm:tackdens} by showing that the neural network is indeed able to control the system to minimize the KL divergence between the two densities for various number of measuring instances $N$.  However, in Figure~\ref{fig:five-dim-sys}(b), we see that when the number of training samples $M=300$, the controller is not able to reach the desired target density. This is because the controller does not have enough samples to estimate the different densities. Figure~\ref{fig:five-dim-sys}(c) depicts the cost~\eqref{eqn:approx-prob} for various training iterations when the number of measuring instances is $N=40$. It can be seen that when the number of training samples $M=300$, the controller performs poorly at minimizing the cost whereas when the training samples sufficiently cover the domain, that is, when $M=1000$ or $2000$, the neural network learns to reverse the forward diffusion. 
\subsubsection{Unicycle robot}
In this experiment, we consider the unicycle dynamics,
\begin{align}\label{eqn:unicycle}
\begin{aligned}
\dot{x}_1 &= u_1 \cos(x_3), \ \
\dot{x}_2 = u_1 \sin(x_3), \ \
\dot{x}_3 = u_2.    
\end{aligned}
\end{align}
We sample $M$ training samples from $p_\text{target} = \mathcal{N}(4,0.2I)$ and $p_{\text{initial}} = \mathcal{N}(0,I)$. Note that this experiment demonstrates that our DDPM-based feedback algorithm can be applied for cases with $V(x) \neq 0$. Figure~\ref{fig:unicycle-expts}(a) depicts the performance of the controller for different number of measurement instances. It shows that with fewer measurement instances, it has more difficult for the controller to track the forward density. Figure~\ref{fig:unicycle-expts}(b) further validates our theoretical result even when $V(x)\neq0$. With higher number of training samples the neural network indeed learns to minimize the final KL divergence of the densities. 

\begin{figure}[tbh]
\centering
\begin{multicols}{2}
\begin{tikzpicture}
  \node (img1)  {\includegraphics[width=0.2100\textwidth]{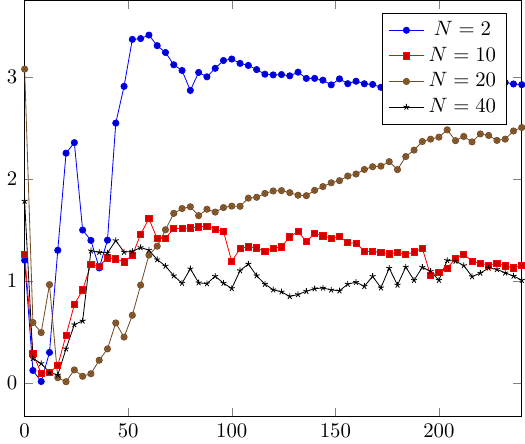}};
  \node[above of= img1, node distance=0cm, yshift=-2.1cm,font=\color{black}]  {Training iterations};  
  \node[above of= img1, node distance=0cm, yshift=-2.5cm,font=\color{black}]  {\small (a)};
  \node[left of= img1, node distance=0cm, rotate=90, anchor=center,yshift=2.2cm,font=\color{black}] { Final KL divergence };
\end{tikzpicture}\columnbreak
\hspace{2mm}
\begin{tikzpicture}
  \node (img1)  {\includegraphics[width=0.2100\textwidth]{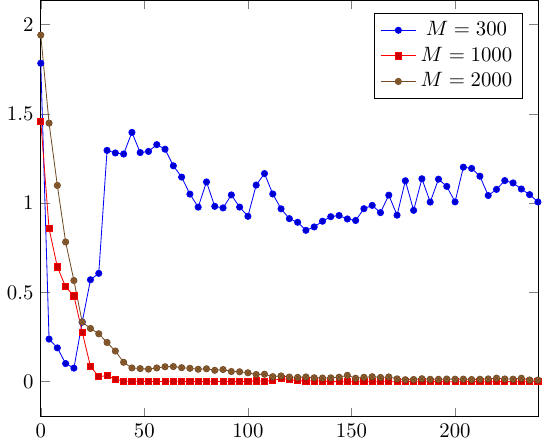}};
  \node[above of= img1, node distance=0cm, yshift=-2.1cm,font=\color{black}]  {Training iterations}; 
  \node[above of= img1, node distance=0cm, yshift=-2.5cm,font=\color{black}]  {(b)};
\end{tikzpicture}
\end{multicols}
\vspace*{-2em}
\caption{Experiments with unicycle dynamics~\eqref{eqn:unicycle} with $p_\text{target} = \mathcal{N}(4,0.2I)$. (a) Final estimated KL divergence for different number of measurement instances $N$ vs. training iterations: shows that more measurement instances are required to achieve better feedback control when going from one Gaussian distribution to another, (b) Final KL divergence vs training iterations for different number of training samples: shows that the neural network can identify the controller with sufficiently large number of training samples and state measuring instances.}
\label{fig:unicycle-expts}
\end{figure}

\subsubsection{Husky robots}
We experimentally validate our controller on Husky robots on PyBullet. We spawn four Husky robots whose initial positions are drawn from a uniform distribution with domain $\Omega=(-20,20)^2$. Here, we use the policy learnt from the unicycle dynamics to control the Husky robots individually to reach $p_\text{target} = \mathcal{N}(0,0.2I)$. Figure~\ref{fig:husky-expts} depicts the initial and final positions of the different robots. It can be seen that the robots, in fact, reach the a neighbourhood close to the origin which is denoted by the black pole. It is important to note that the learnt feedback controller performs well even with drift and damping incorporated in the physics-engine.
\begin{figure}[tbh]
\centering
\begin{tikzpicture}
  \node (img1)  {\includegraphics[width=0.4200\textwidth]{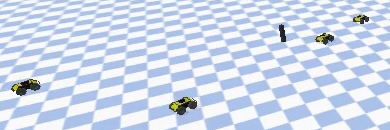}};
\end{tikzpicture}\\
\begin{tikzpicture}
  \node (img1)  {\includegraphics[width=0.4200\textwidth]{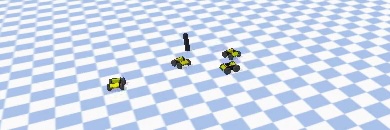}};
\end{tikzpicture}
\caption{Experiments with Husky robots (Top) Husky robots with initial positions sampled from a uniform distribution, (Bottom) Final positions of the different robots sampled from a neighbourhood around the origin.}
\label{fig:husky-expts}
\end{figure}

\section{Conclusion} 
We presented a novel DDPM-based feedback control method for nonlinear control systems. We used the forward noising process of DDPMs as a trajectory to track in reverse for the control system such that the final state is sampled from the target set. We theoretically show that nonlinear control affine systems can indeed track the forward process in reverse subject to a controllability condition on the Lie brackets of the system. We numerically validate our method on various nonlinear systems. There are many open lines for future work. One key direction is to use a forward process that is compatible with the dynamics of the control system. Another direction is to solve the infinite-horizon control problem by combining sequences of finite-time DDPM-based control problem.

\bibliography{refs}

\begin{thebibliography}{10}

\bibitem{agrachev2019comprehensive}
Andrei Agrachev, Davide Barilari, and Ugo Boscain.
\newblock {\em A comprehensive introduction to sub-Riemannian geometry}, volume
  181.
\newblock Cambridge University Press, 2019.

\bibitem{agrachev2009optimal}
Andrei Agrachev and Paul Lee.
\newblock Optimal transportation under nonholonomic constraints.
\newblock {\em Transactions of the American Mathematical Society},
  361(11):6019--6047, 2009.

\bibitem{bakry2014analysis}
Dominique Bakry, Ivan Gentil, Michel Ledoux, et~al.
\newblock {\em Analysis and geometry of Markov diffusion operators}, volume
  103.
\newblock Springer, 2014.

\bibitem{bramanti2014invitation}
Marco Bramanti et~al.
\newblock {\em An invitation to hypoelliptic operators and H{\"o}rmander's
  vector fields}, volume 1298.
\newblock Springer, 2014.

\bibitem{FB-NEL-ADL:2000}
Francesco Bullo, Naomi~Ehrich Leonard, and Andrew~D Lewis.
\newblock Controllability and motion algorithms for underactuated lagrangian
  systems on lie groups.
\newblock {\em IEEE Transactions on Automatic Control}, 45(8):1437--1454, 2000.

\bibitem{bussi2007accurate}
Giovanni Bussi and Michele Parrinello.
\newblock Accurate sampling using langevin dynamics.
\newblock {\em Physical Review E}, 75(5):056707, 2007.

\bibitem{buzhardt2023controlled}
Jake Buzhardt and Phanindra Tallapragada.
\newblock Controlled density transport using perron frobenius generators.
\newblock {\em arXiv preprint arXiv:2304.13829}, 2023.

\bibitem{carrillo2019blob}
Jos{\'e}~Antonio Carrillo, Katy Craig, and Francesco~S Patacchini.
\newblock A blob method for diffusion.
\newblock {\em Calculus of Variations and Partial Differential Equations},
  58:1--53, 2019.

\bibitem{chen2016optimal}
Yongxin Chen, Tryphon~T Georgiou, and Michele Pavon.
\newblock Optimal transport over a linear dynamical system.
\newblock {\em IEEE Transactions on Automatic Control}, 62(5):2137--2152, 2016.

\bibitem{chi2023diffusion}
Cheng Chi, Siyuan Feng, Yilun Du, Zhenjia Xu, Eric Cousineau, Benjamin
  Burchfiel, and Shuran Song.
\newblock Diffusion policy: Visuomotor policy learning via action diffusion.
\newblock {\em arXiv preprint arXiv:2303.04137}, 2023.

\bibitem{chung2022score}
Hyungjin Chung and Jong~Chul Ye.
\newblock Score-based diffusion models for accelerated mri.
\newblock {\em Medical image analysis}, 80:102479, 2022.

\bibitem{curtain2012introduction}
Ruth~F Curtain and Hans Zwart.
\newblock {\em An introduction to infinite-dimensional linear systems theory},
  volume~21.
\newblock Springer Science \& Business Media, 2012.

\bibitem{davies1989heat}
Edward~Brian Davies.
\newblock {\em Heat kernels and spectral theory}.
\newblock Number~92. Cambridge university press, 1989.

\bibitem{elamvazhuthi2023density}
Karthik Elamvazhuthi and Spring Berman.
\newblock Density stabilization strategies for nonholonomic agents on compact
  manifolds.
\newblock {\em IEEE Transactions on Automatic Control}, 2023.

\bibitem{elamvazhuthi2018bilinear}
Karthik Elamvazhuthi, Hendrik Kuiper, Matthias Kawski, and Spring Berman.
\newblock Bilinear controllability of a class of advection--diffusion--reaction
  systems.
\newblock {\em IEEE Transactions on Automatic Control}, 64(6):2282--2297, 2018.

\bibitem{elamvazhuthi2023dynamical}
Karthik Elamvazhuthi, Siting Liu, Wuchen Li, and Stanley Osher.
\newblock Dynamical optimal transport of nonlinear control-affine systems.
\newblock {\em Journal of Computational Dynamics}, 10(4):425--449, 2023.

\bibitem{evans2022partial}
Lawrence~C Evans.
\newblock {\em Partial differential equations}, volume~19.
\newblock American Mathematical Society, 2022.

\bibitem{finlay2020train}
Chris Finlay, J{\"o}rn-Henrik Jacobsen, Levon Nurbekyan, and Adam Oberman.
\newblock How to train your neural ode: the world of jacobian and kinetic
  regularization.
\newblock In {\em International conference on machine learning}, pages
  3154--3164. PMLR, 2020.

\bibitem{garofalo1998lipschitz}
Nicola Garofalo and Duy-Minh Nhieu.
\newblock Lipschitz continuity, global smooth approximations and extension
  theorems for sobolev functions in carnot-carath{\'e}odory spaces.
\newblock {\em Journal d’Analyse Math{\'e}matique}, 74(1):67--97, 1998.

\bibitem{gluck2019almost}
Jochen Gl{\"u}ck and Martin~R Weber.
\newblock Almost interior points in ordered banach spaces and the long--term
  behaviour of strongly positive operator semigroups.
\newblock {\em arXiv preprint arXiv:1901.03306}, 2019.

\bibitem{he2023diffusion}
Haoran He, Chenjia Bai, Kang Xu, Zhuoran Yang, Weinan Zhang, Dong Wang, Bin
  Zhao, and Xuelong Li.
\newblock Diffusion model is an effective planner and data synthesizer for
  multi-task reinforcement learning.
\newblock {\em arXiv preprint arXiv:2305.18459}, 2023.

\bibitem{hegde2023generating}
Shashank Hegde, Sumeet Batra, KR~Zentner, and Gaurav~S Sukhatme.
\newblock Generating behaviorally diverse policies with latent diffusion
  models.
\newblock {\em arXiv preprint arXiv:2305.18738}, 2023.

\bibitem{hernandez1996linear}
Daniel Hern{\'a}ndez-Hern{\'a}ndez, On{\'e}simo Hern{\'a}ndez-Lerma, and
  Michael Taksar.
\newblock The linear programming approach to deterministic optimal control
  problems.
\newblock {\em Applicationes Mathematicae}, 24(1):17--33, 1996.

\bibitem{hindawi2011mass}
Ahed Hindawi, J-B Pomet, and Ludovic Rifford.
\newblock Mass transportation with lq cost functions.
\newblock {\em Acta applicandae mathematicae}, 113:215--229, 2011.

\bibitem{ho2020denoising}
Jonathan Ho, Ajay Jain, and Pieter Abbeel.
\newblock Denoising diffusion probabilistic models.
\newblock {\em Advances in neural information processing systems},
  33:6840--6851, 2020.

\bibitem{AS:1985}
Alberto Isidori.
\newblock {\em Nonlinear control systems: an introduction}.
\newblock Springer, 1985.

\bibitem{janner2022planning}
Michael Janner, Yilun Du, Joshua Tenenbaum, and Sergey Levine.
\newblock Planning with diffusion for flexible behavior synthesis.
\newblock In {\em International Conference on Machine Learning}, pages
  9902--9915. PMLR, 2022.

\bibitem{karny1996towards}
Miroslav K{\'a}rn{\`y}.
\newblock Towards fully probabilistic control design.
\newblock {\em Automatica}, 32(12):1719--1722, 1996.

\bibitem{kobyzev2020normalizing}
Ivan Kobyzev, Simon~JD Prince, and Marcus~A Brubaker.
\newblock Normalizing flows: An introduction and review of current methods.
\newblock {\em IEEE transactions on pattern analysis and machine intelligence},
  43(11):3964--3979, 2020.

\bibitem{AL-MMC:2013}
Andrzej Lasota and Michael~C Mackey.
\newblock {\em Chaos, fractals, and noise: stochastic aspects of dynamics},
  volume~97.
\newblock Springer Science \& Business Media, 2013.

\bibitem{JCL:2012}
Jean-Claude Latombe.
\newblock {\em Robot motion planning}, volume 124.
\newblock Springer Science \& Business Media, 2012.

\bibitem{li2023hierarchical}
Wenhao Li, Xiangfeng Wang, Bo~Jin, and Hongyuan Zha.
\newblock Hierarchical diffusion for offline decision making.
\newblock In {\em International Conference on Machine Learning}, pages
  20035--20064. PMLR, 2023.

\bibitem{majumdar2014convex}
Anirudha Majumdar, Ram Vasudevan, Mark~M Tobenkin, and Russ Tedrake.
\newblock Convex optimization of nonlinear feedback controllers via occupation
  measures.
\newblock {\em The International Journal of Robotics Research},
  33(9):1209--1230, 2014.

\bibitem{monti2005non}
Roberto Monti and Daniele Morbidelli.
\newblock Non-tangentially accessible domains for vector fields.
\newblock {\em Indiana University mathematics journal}, pages 473--498, 2005.

\bibitem{ouhabaz2009analysis}
El-Maati Ouhabaz.
\newblock {\em Analysis of heat equations on domains.(LMS-31)}.
\newblock Princeton University Press, 2009.

\bibitem{santambrogio2015optimal}
Filippo Santambrogio.
\newblock Optimal transport for applied mathematicians.
\newblock {\em Birk{\"a}user, NY}, 55(58-63):94, 2015.

\bibitem{song2020score}
Yang Song, Jascha Sohl-Dickstein, Diederik~P Kingma, Abhishek Kumar, Stefano
  Ermon, and Ben Poole.
\newblock Score-based generative modeling through stochastic differential
  equations.
\newblock In {\em International Conference on Learning Representations}, 2020.

\bibitem{turner2019metropolis}
Ryan Turner, Jane Hung, Eric Frank, Yunus Saatchi, and Jason Yosinski.
\newblock Metropolis-hastings generative adversarial networks.
\newblock In {\em International Conference on Machine Learning}, pages
  6345--6353. PMLR, 2019.

\bibitem{urain2023se}
Julen Urain, Niklas Funk, Jan Peters, and Georgia Chalvatzaki.
\newblock Se (3)-diffusionfields: Learning smooth cost functions for joint
  grasp and motion optimization through diffusion.
\newblock In {\em 2023 IEEE International Conference on Robotics and Automation
  (ICRA)}, pages 5923--5930. IEEE, 2023.

\bibitem{vaidya2010nonlinear}
Umesh Vaidya, Prashant~G Mehta, and Uday~V Shanbhag.
\newblock Nonlinear stabilization via control lyapunov measure.
\newblock {\em IEEE Transactions on Automatic Control}, 55(6):1314--1328, 2010.

\bibitem{xian2023chaineddiffuser}
Zhou Xian, Nikolaos Gkanatsios, Theophile Gervet, Tsung-Wei Ke, and Katerina
  Fragkiadaki.
\newblock Chaineddiffuser: Unifying trajectory diffusion and keypose prediction
  for robotic manipulation.
\newblock In {\em Conference on Robot Learning}, pages 2323--2339. PMLR, 2023.

\bibitem{yang2022diffusion}
Ling Yang, Zhilong Zhang, Yang Song, Shenda Hong, Runsheng Xu, Yue Zhao, Wentao
  Zhang, Bin Cui, and Ming-Hsuan Yang.
\newblock Diffusion models: A comprehensive survey of methods and applications.
\newblock {\em ACM Computing Surveys}, 2022.

\bibitem{zhou20213d}
Linqi Zhou, Yilun Du, and Jiajun Wu.
\newblock 3d shape generation and completion through point-voxel diffusion.
\newblock In {\em Proceedings of the IEEE/CVF International Conference on
  Computer Vision}, pages 5826--5835, 2021.

\end{thebibliography}
\bibliographystyle{plain}

\newpage
\appendix
\onecolumn
\section{Analysis}
\label{sec:app}

We define some new notation that will be used in this section. Let $L^{\perp}_2(\Omega) := \{ f \in L^2(\Omega);\int_{\Omega} f(x)dx = 0\}$ be the subspace of functions in $L^2(\Omega)$ that integrate to $0$. We define the {\it horizontal Sobolev space} $WH^1(\Omega) = \big \lbrace f \in L^2(\Omega): Y_if \in L^2(\Omega) \text{ for } 1 \leq i \leq m \big \rbrace$. We equip this space with the horizontal Sobolev norm $\|\cdot\|_{WH^1}$, given by $\|f\|_{WH^1} = \Big( \|f\|^2_{2} + \sum_{i=1}^n\|  Y_if\|^2_{2}\Big)^{1/2} \nonumber$ for each $f \in WH^1(\Omega)$. Here, the derivative action of $X_i$ on a function $f$ is to be understood in the distributional sense. If $C>0$ is a constant, then for $p \in L^2(\Omega)$, $p>C$ will mean that $p(x)>c$ for almost every $x \in \Omega$.

Let $\mathcal{D}(\omega) = WH^1(\Omega)$, we define the sesquilinear form $\omega:\mathcal{D}(\omega) \times \mathcal{D}(\omega) \rightarrow \mathbb{C}$ as
\begin{equation}
	\label{eq:Clpgenform1hol}
	\omega(u,v) = \sum_{i= 1}^m\int_{\Omega}  Y_i  u (\mathbf{x}) \cdot Y_i \bar{v} (\mathbf{x})d\mathbf{x}
\end{equation} 
for each $u,v \in \mathcal{D}(\omega)$. 
We associate with the form $\omega$ an operator $\A :\mathcal{D}(\A)  \rightarrow  L^2(\Omega)$, defined as $ \A u = v$, if $\omega(u,\phi) = \langle v , \phi \rangle $ for all $\phi \in \mathcal{D}(\omega)$ and for all $u \in \mathcal{D}(A) = \lbrace g \in \mathcal{D}(\omega): ~ \exists f \in L^2(\Omega) ~ \text{s.t.} ~  \omega(g,\phi)= \langle f, \phi \rangle_a ~ \forall \phi \in \mathcal{D}(\omega) \rbrace$. Note that the operator $\A$ is a weak formulation of the the second-order partial differential operator $\sum_{i =1}^m Y^*_i  Y_i ( \cdot ~)$. 

When $Y_i = \partial_{x_i}$ are coordinate vector-fields, the operator $\A$ is the negative of the Laplacian operator $-\Delta  = -\sum_{i=1}^d \partial_{x^2_i} $, and for this special case we will denote the operator by $\B$.
Given these definitions, we note the following classical properties of the heat equation.



\label{sec:analysis}

\begin{proposition}[\textbf{Properties of the heat equation}]
\label{prop:hteqprop}
Let $V \equiv 0$. Given Assumption \ref{asmp1}, let $p_0 \in L^2(\Omega)$, then there exists a (mild) solution $p_t \in C([0,T];L^2(\Omega)) $ to the heat equation \eqref{eq:fwdpdf}. Moreover, the solution satisfies the following properties.
\begin{enumerate}
\item There exists a semigroup of operators $(\T(t))_{t \geq 0}$ such that the solution of the heat equation \eqref{eq:fwdpdf} is given by $\T(t)p_0$.
\item $p_t= \T(t) p_0 \in \D(\B)$ for all $ t \in (0,T]$. 
\item If $p_{\text{target}} \in \D(\B)$, then $\dot{p} = -\B p$ for all $t \in [0,T]$.
\item The asymptotic stability estimate holds 
\begin{equation}
\|\T(t)-c\mathbf{1}_\Omega\| \leq Me^{-\omega t} \|p_0 - c\mathbf{1}_\Omega\|
\end{equation}
for all $ t \geq 0$, where $c = 1/\int_{\Omega}dx$, for some $M, \omega>0$ independent of $p_0$.
\end{enumerate}
\end{proposition}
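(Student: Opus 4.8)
The plan is to treat the Neumann heat semigroup through the sesquilinear-form machinery already set up in this appendix, specialized to the coordinate vector fields $Y_i = \partial_{x_i}$, so that $\omega$ becomes the Dirichlet form $b(u,v) = \int_\Omega \nabla u \cdot \nabla \bar v \, \de x$ on $\D(\omega) = WH^1(\Omega) = H^1(\Omega)$ and $\A$ becomes $\B = -\Delta$ with Neumann (zero-flux) boundary conditions. First I would verify that $b$ is densely defined, symmetric, nonnegative, continuous on $H^1(\Omega)$, and closed — closedness being exactly the completeness of $H^1(\Omega)$ under the graph norm, which coincides with $\|\cdot\|_{WH^1}$ here. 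By the representation theorem for closed, symmetric, lower-bounded forms (Kato, Ouhabaz), $b$ is associated with a unique self-adjoint nonnegative operator $\B$, and $-\B$ generates a strongly continuous analytic semigroup of contractions $\T(t) = e^{-t\B}$ on $L^2(\Omega)$. The function $p_t := \T(t) p_0$ is then by definition the mild solution of $\dot p = -\B p$ with $p_0 \in L^2(\Omega)$, giving item (1).

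For items (2) and (3) I would invoke standard properties of analytic semigroups. Analyticity yields the smoothing estimate $\T(t) L^2(\Omega) \subseteq \D(\B^k)$ for every $t > 0$ and every $k$, so in particular $p_t = \T(t) p_0 \in \D(\B)$ for all $t \in (0,T]$, which is item (2). For item (3), the general theory of $C_0$-semigroups guarantees that whenever the initial datum lies in the domain of the generator — here $p_{\text{target}} \in \D(\B)$ — the orbit $t \mapsto \T(t) p_0$ is continuously differentiable on $[0,T]$ and satisfies $\dot p = -\B p$ in the strong $L^2$ sense, i.e. it is a classical rather than merely mild solution. Both facts are immediate once the analytic-semigroup structure from the first step is in place.

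The substantive part is item (4), the exponential return to the mean. I would first use the compact Sobolev embedding $H^1(\Omega) \hookrightarrow\hookrightarrow L^2(\Omega)$ (Rellich–Kondrachov, valid under the $C^2$, hence Lipschitz, boundary regularity of Assumption \ref{asmp1}) to conclude that $\B$ has compact resolvent and therefore a discrete spectrum $0 \le \lambda_0 \le \lambda_1 \le \cdots \to \infty$. Connectedness of $\Omega$ forces the kernel of $\B$ to be exactly the constants: if $\B f = 0$ then $b(f,f) = \int_\Omega |\nabla f|^2 \, \de x = 0$, so $\nabla f = 0$ a.e. and $f$ is constant on the connected set $\Omega$. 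Hence $\lambda_0 = 0$ is simple with eigenfunction proportional to $\mathbf{1}_\Omega$, and the next eigenvalue $\lambda_1 > 0$ is the spectral gap (the optimal Poincaré constant). Writing $P$ for the orthogonal projection onto the constants, $P p_0 = \big(\int_\Omega p_0\,\de x / \int_\Omega \de x\big)\mathbf{1}_\Omega = c\,\mathbf{1}_\Omega$ for any probability density $p_0$, and the remainder $p_0 - c\mathbf{1}_\Omega$ lies in $L^\perp_2(\Omega)$ since $\int_\Omega (p_0 - c\mathbf{1}_\Omega)\,\de x = 1 - c\int_\Omega \de x = 0$. Because $\T(t)\mathbf{1}_\Omega = \mathbf{1}_\Omega$ and $\T(t)$ commutes with $P$ and hence leaves $L^\perp_2(\Omega)$ invariant, the spectral theorem for the self-adjoint $\B$ gives $\|\T(t) p_0 - c\mathbf{1}_\Omega\| = \|\T(t)(p_0 - c\mathbf{1}_\Omega)\| \le e^{-\lambda_1 t}\|p_0 - c\mathbf{1}_\Omega\|$, which is the claimed estimate with $M = 1$ and $\omega = \lambda_1$.

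The hard part will be item (4): everything hinges on producing a genuine spectral gap, and this is precisely where the hypotheses enter — the boundary regularity to secure the compact embedding (and hence discreteness of the spectrum), and the connectedness of $\Omega$ to pin the kernel down to the one-dimensional space of constants. If $\Omega$ were disconnected the zero eigenvalue would be degenerate and no single multiple of $\mathbf{1}_\Omega$ would attract all data. The remaining items are essentially bookkeeping on top of the abstract form-to-semigroup correspondence.
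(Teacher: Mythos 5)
Your proposal is correct and is essentially the paper's own proof written out in full: the paper simply cites the classical literature (Ouhabaz for existence and exponential stability via exactly the form-method/semigroup machinery you develop, and Curtain--Zwart's Theorem 2.1.10 for the differentiability claim in item (3)), and your Dirichlet-form construction, analytic-smoothing argument, and Rellich--Kondrachov/spectral-gap derivation of the decay estimate with $M=1$, $\omega=\lambda_1$ are precisely the standard arguments behind those citations. The only point worth noting is that the estimate in item (4) with $c = 1/\int_\Omega \de x$ requires $\int_\Omega p_0\,\de x = 1$, which you correctly (if implicitly) assume by restricting to probability densities, consistent with the paper's intended use.
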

\begin{proof}
The existence of and exponential stability of the solution of the heat equation is classical \cite{ouhabaz2009analysis}. The differentiability of the solution with respect to time is also well known. See for example, \cite{curtain2012introduction}[Theorem 2.1.10]
\end{proof}

In the following proposition we establish the invertibility of the operator $\sum_{i=1}^mY_i^* Y_i$.

\begin{proposition} 
\label{prop:invA}
Given Assumption \ref{asmp2} and \ref{asmp1}, 
	Suppose $f \in L^2(\Omega)$ such that $\int_{\Omega} f(x) dx =0$. Then there exists a unique solution $\phi \in WH^1(\Omega)$ to the nonholonomic Poisson equation,
\begin{equation}
\label{eq:hpoisseq}
\sum_{i=1}^mY_i^* Y_i \phi = f.
\end{equation}
Hence, the restriction of the operator $\A$ on $L^2_{\perp}(\Omega)$, is invertible.
\end{proposition}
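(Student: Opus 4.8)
The plan is to recast \eqref{eq:hpoisseq} as a variational problem and solve it with the Lax--Milgram theorem on the closed subspace of zero-mean functions. The only obstruction to applying Lax--Milgram directly on $WH^1(\Omega)$ is that the form $\omega$ is degenerate: since $Y_i \mathbf{1}_\Omega = 0$ for every $i$, constants lie in the kernel of $\omega$, so $\omega$ cannot be coercive on all of $WH^1(\Omega)$. I would therefore work on the Hilbert space $H := WH^1(\Omega) \cap L^2_{\perp}(\Omega)$, equipped with the $\|\cdot\|_{WH^1}$ norm, where this degeneracy is factored out.

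The analytic heart of the argument is a sub-elliptic Poincar\'e inequality: there exists $C>0$ such that
\begin{equation}
\|u\|_{2}^2 \le C \sum_{i=1}^m \|Y_i u\|_2^2 = C\,\omega(u,u) \qquad \text{for all } u \in H.
\end{equation}
This is precisely where Assumptions \ref{asmp2} and \ref{asmp1} enter, and I would import it from \cite{elamvazhuthi2023density}. The standard route is a compactness argument: one first invokes the Rellich-type compact embedding of $WH^1(\Omega)$ into $L^2(\Omega)$, valid for a bounded, non-characteristic domain with $C^2$ boundary satisfying the H\"ormander condition, and then argues by contradiction. A normalized sequence $u_n \in H$ with $\omega(u_n,u_n)\to 0$ would subconverge in $L^2(\Omega)$ to some $u$ with $\|u\|_2 = 1$, $\int_\Omega u\,dx = 0$, and $Y_i u = 0$ for all $i$; the Chow--Rashevsky condition (Assumption \ref{asmp2}) forces such a $u$ to be constant on the horizontally connected domain $\Omega$, and the zero-mean constraint then gives $u = 0$, a contradiction.

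Granting the Poincar\'e inequality, coercivity of $\omega$ on $H$ is immediate, namely $\omega(u,u) \ge c\,\|u\|_{WH^1}^2$ for some $c>0$, while boundedness of $\omega$ is clear from its definition. For $f \in L^2_{\perp}(\Omega)$ the map $v \mapsto \langle f, v\rangle$ is a bounded linear functional on $H$, so Lax--Milgram (indeed the Riesz representation theorem, since $\omega$ is symmetric) yields a unique $\phi \in H$ with $\omega(\phi, v) = \langle f, v\rangle$ for all $v \in H$. It then remains to upgrade this to the full weak formulation defining $\mathcal{D}(\A)$: any $v \in WH^1(\Omega)$ decomposes as $v = v_0 + c\mathbf{1}_\Omega$ with $v_0 \in H$, and since $\omega(\phi, \mathbf{1}_\Omega) = 0$ and $\langle f, \mathbf{1}_\Omega\rangle = \int_\Omega f\,dx = 0$, the identity $\omega(\phi,v) = \langle f, v\rangle$ extends to every test function $v \in \mathcal{D}(\omega)$. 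Hence $\phi \in \mathcal{D}(\A)$ and $\A \phi = f$, so $\phi$ solves \eqref{eq:hpoisseq}.

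Uniqueness (modulo constants, equivalently within $L^2_{\perp}(\Omega)$) follows from the same coercivity: if $\A\phi = 0$ then $\omega(\phi,\phi) = 0$, so $Y_i\phi = 0$ for all $i$, whence $\phi$ is constant and, being zero-mean, vanishes. This shows that $\A$ restricted to $\mathcal{D}(\A) \cap L^2_{\perp}(\Omega)$ is a bijection onto $L^2_{\perp}(\Omega)$ with bounded inverse, which is the claimed invertibility. The main obstacle throughout is establishing the Poincar\'e inequality and the underlying compact embedding in the degenerate sub-Riemannian setting; once those are in hand, everything downstream is a routine application of Lax--Milgram.
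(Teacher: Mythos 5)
Your proposal is correct, and its skeleton coincides with the paper's: both restrict to the zero-mean subspace $WH^1_{\perp}(\Omega)=WH^1(\Omega)\cap L^2_{\perp}(\Omega)$, establish a sub-elliptic Poincar\'e inequality there, and conclude by Lax--Milgram. The one genuine difference is how the Poincar\'e inequality is obtained. The paper goes through spectral theory: it cites \cite{monti2005non} and \cite{garofalo1998lipschitz} to show $\Omega$ is an NTA (hence $\epsilon$-$\delta$) domain under Assumption \ref{asmp1}, then invokes \cite{elamvazhuthi2023density} to get that the spectrum of $\A$ is purely discrete with $0=\lambda_1$ a \emph{simple} eigenvalue, and reads off the inequality $\omega(u,u)\ge \lambda_2\|u\|_2^2$ on $L^2_{\perp}(\Omega)$ from the eigenfunction expansion (the paper's displayed estimate \eqref{eq:coerK} has a typo, writing $\tfrac{1}{\lambda_2}$ where the spectral gap $\lambda_2$ belongs; your formulation with an unnamed constant $C$ sidesteps this). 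You instead run the classical indirect argument: Rellich-type compact embedding of $WH^1(\Omega)$ into $L^2(\Omega)$ plus a contradiction sequence, with Chow--Rashevsky forcing any $u$ with $Y_iu=0$ to be constant. These are two faces of the same coin --- the compact embedding is exactly what makes the spectrum discrete, and the constancy of horizontal-gradient-free functions is exactly the simplicity of $\lambda_1$ --- so both routes lean on the same nontrivial domain-regularity input from \cite{elamvazhuthi2023density}; the paper's version buys an explicit constant ($\lambda_2$), yours is more self-contained modulo the embedding. You also fill in two details the paper leaves implicit: the extension of the variational identity from test functions in $H$ to all of $\mathcal{D}(\omega)$ via the decomposition $v=v_0+c\mathbf{1}_\Omega$ together with $\omega(\phi,\mathbf{1}_\Omega)=0$ and $\int_\Omega f\,dx=0$ (the paper tests against all of $\mathcal{D}(\omega)$ but applies Lax--Milgram only on $WH^1_{\perp}(\Omega)$ without comment), and the explicit uniqueness-modulo-constants argument. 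Both additions are correct and strengthen the write-up.
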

\begin{proof}
	Consider the operator $\A = \sum_{i=1}^m Y_i^*Y_i$. It is easy to see that $\mathbf{1}$ is an eigenvector of $\A$ since $Y_i\mathbf{1}=\mathbf{0}$.  We know \cite{elamvazhuthi2023density} from that $\mathbf{1}$ is an eigenvector is an simple eigenvalue. Moreover, since the domain $\Omega$ satisfies Assumption \ref{asmp1} the domain is also NTA \cite{monti2005non}[Theorem 1.1], and hence also $\epsilon-\delta$ in the sense of \cite{garofalo1998lipschitz}. Therefore, from \cite{elamvazhuthi2023density}[Lemma III.3], the spectrum of $\A$ is purely discrete, consisting only of eigenvalues of finite multiplicity and have no finite accumulation point. Let $(\lambda_n)_{n=1}^{\infty}$ be the eigenvalues corresponding to the orthogonal basis of eigenvectors $\{e_n;n \in \mathbb{N}\}$. 
 
Since the operator $\A$ is positive and self-adjoint the eigenvalues are ordered in the form $0=\lambda_1< \lambda_2....$, with $\lim_{ n \rightarrow \infty} \lambda_n= +\infty$.

 Let $u \in L^2_\perp(\Omega) $. Then $u = \sum_{n=2}^\infty \alpha_n e_n$ for some unique sequence $(\alpha_n)_{n=1}^{\infty}$ such that $\sum_{n=2}^{\infty} |\alpha_n|^2 = \|u\|^2_2 $.
From this we can compute that
\begin{align}
\label{eq:coerK}
\langle Au,u \rangle_2 = \sum_{n=2}^{\infty} \lambda_i |\alpha_n|^2 
\geq   \frac{1}{\lambda_2}  \sum_{n=2}^{\infty}  |\alpha_n|^2 =  \frac{1}{\lambda_2}  \|u\|^2_2
\end{align}
Define the space $WH^1_{\perp}(\Omega) : = WH^1(\Omega) \cap L^2_{\perp}(\Omega)$ equiped with the norm $\|u\|_{\omega} : = \sqrt{\omega(u,u)}$.

 The solvability of equation \eqref{eq:hpoisseq} is equivalent to finding a solution $\phi \in WH^1_{\perp}(\Omega) $ such that 
\[\omega(\phi,v) = <f,v>_2 ,~~~\forall v \in \mathcal{D}(\omega)\]
It is easy to see that 
\[\omega(u,v) \leq \|u\|_\omega \|v\|_{\omega}\]
for all $u,v \in  WH^1_{\perp}(\Omega) $
Moreover, from the (Poincare inequality) estimate \eqref{eq:coerK} that 
\[\omega(u,u) \geq \frac{1}{\lambda_2}\|u\|_2\]
for all $u \in  WH^1_{\perp}(\Omega) $. Hence, existence and uniqueness of the solution $\phi \in WH^1_{\perp}(\Omega)$ from the Lax-Milgram theorem \cite{evans2022partial}.
\end{proof}

Using the above result on invertibility of the operator $\mathcal{A} = \sum_{i=1}^m Y_i^* Y_i$, we can establish that that any sufficiently regular trajectory on the set of probability densities can be tracked using the control system in reverse.

 {\it Proof of Lemma} \ref{lem:extrafull}. Let $\phi_t = \A^{-1}\partial_t p_{r}$, for all $ t \in [0,T]$. Then the control law law $u_i = \frac{Y_i \phi_t}{p}$ is well defined by Proposition \ref{prop:invA} and a solution $p^{\textup{c}} = p_{r}$ for all $t \in [0,T]$.\hfill $\qed$

\begin{lemma}[\textbf{Tracking the Heat Equation}]
\label{lem:extra}
Given Assumption \ref{asmp1}, 
	suppose $p_0  \in L^2(\Omega)$ is a probability density and $c,C >$ are constants such  $c \leq p_0 \leq C$. Then  $c \leq p_t \leq C$ for all $t \in [0,T]$.

 Suppose additionally, that $p_0 \in \D(\B)$, then there exists a control law $u_i  \in C([0,T]; L^2(\Omega))$ such that a solution $p^{\textup{c}}$ of the \eqref{eq:ctctty} satisfies 
\[p^{\textup{c}}_t = p^{\textup{f}}_{T-t} \text{ for all } t\in [0,T]\] 
\end{lemma}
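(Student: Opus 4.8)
The plan is to prove the two assertions in turn, using the first (a maximum principle for the Neumann heat flow) precisely to supply the strict positivity and two-sided bounds that make the tracking controller of Lemma~\ref{lem:extrafull} well behaved. Throughout I take $V\equiv 0$, so the forward process \eqref{eq:fwdpdf} is the heat equation and $p^{\textup{f}}_t = \mathcal{T}(t)p_0$ with $\mathcal{T}(t)$ the semigroup from Proposition~\ref{prop:hteqprop}.

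First I would establish $c \leq p_t \leq C$. The key observation is that constants lie in the kernel of $\mathcal{B}$, so the semigroup fixes them: $\mathcal{T}(t)(c\mathbf{1}_\Omega) = c\mathbf{1}_\Omega$ and $\mathcal{T}(t)(C\mathbf{1}_\Omega) = C\mathbf{1}_\Omega$. Since the heat semigroup with the zero-flux boundary condition is positivity-preserving (a standard consequence of the parabolic maximum principle; equivalently, the Neumann heat semigroup is Markovian), applying $\mathcal{T}(t)$ to the nonnegative data $p_0 - c\mathbf{1}_\Omega \geq 0$ and $C\mathbf{1}_\Omega - p_0 \geq 0$ gives $p_t - c\mathbf{1}_\Omega = \mathcal{T}(t)(p_0 - c\mathbf{1}_\Omega) \geq 0$ and $C\mathbf{1}_\Omega - p_t \geq 0$, i.e. $c \leq p_t \leq C$ for all $t$.

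For the tracking claim I would set $p^{\text{ref}}_t := p^{\textup{f}}_{T-t}$ and verify the hypotheses of Lemma~\ref{lem:extrafull}. Temporal regularity comes from the additional assumption $p_0 \in \mathcal{D}(\mathcal{B})$: by Proposition~\ref{prop:hteqprop}(3), $\partial_t p^{\textup{f}}_t = -\mathcal{B} p^{\textup{f}}_t$, and since $\mathcal{B}$ commutes with the analytic semigroup, $\mathcal{B} p^{\textup{f}}_t = \mathcal{T}(t)\mathcal{B} p_0 \in C([0,T];L^2(\Omega))$; reversing time gives both $p^{\text{ref}} \in C([0,T];L^2(\Omega))$ and $\partial_t p^{\text{ref}} \in C([0,T];L^2(\Omega))$. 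Mass conservation under the Neumann heat flow keeps $p^{\text{ref}}_t$ a probability density, and the bound from the first part gives the strict positivity $p^{\text{ref}}_t \geq c > 0$. Invoking Lemma~\ref{lem:extrafull} (whose controllability hypothesis, Assumption~\ref{asmp2}, is what guarantees the invertibility of $\mathcal{A}$ through Proposition~\ref{prop:invA}) then produces the control law $u_i = Y_i\phi_t / p^{\text{ref}}_t$ with $\phi_t = \mathcal{A}^{-1}\partial_t p^{\text{ref}}_t$, for which the controlled density solving \eqref{eq:ctctty} satisfies $p^{\textup{c}}_t = p^{\text{ref}}_t = p^{\textup{f}}_{T-t}$.

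Finally I would upgrade this to the claimed regularity $u_i \in C([0,T];L^2(\Omega))$. Mass conservation forces $\partial_t p^{\text{ref}}_t \in L^2_\perp(\Omega)$, so Proposition~\ref{prop:invA} makes $\mathcal{A}^{-1}$ a bounded operator into $WH^1(\Omega)$, whence $t \mapsto Y_i\phi_t$ inherits the $L^2$-continuity of $t \mapsto \partial_t p^{\text{ref}}_t$. The delicate point, and the reason the first part is indispensable, is the continuity of the quotient $u_i = Y_i\phi_t/p^{\text{ref}}_t$, since division is not continuous on $L^2$ in general. Here I would use the uniform two-sided bound: along $t_n \to t$, the estimates $c \leq p^{\text{ref}}_{t_n} \leq C$ yield $1/p^{\text{ref}}_{t_n} \to 1/p^{\text{ref}}_t$ pointwise a.e. and bounded by $1/c$, and splitting $Y_i\phi_{t_n}/p^{\text{ref}}_{t_n} - Y_i\phi_t/p^{\text{ref}}_t$ into a term bounded by $\|Y_i\phi_{t_n}-Y_i\phi_t\|_2/c$ and a term handled by dominated convergence gives $L^2$-convergence, hence $u_i \in C([0,T];L^2(\Omega))$.
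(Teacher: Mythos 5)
Your proposal is correct and follows essentially the same route as the paper's proof: preservation of the two-sided bounds under the zero-flux heat semigroup, the identity $\dot p = -\B p$ for $p_0 \in \D(\B)$ forcing $\partial_t p^{\text{ref}}_t \in L^2_\perp(\Omega)$, and then the controller $u_i = Y_i \A^{-1}\partial_t p^{\text{ref}}_t / p^{\text{ref}}_t$ via Proposition \ref{prop:invA} and Lemma \ref{lem:extrafull}. The only differences are that you prove the bound preservation directly from positivity preservation plus invariance of constants where the paper cites the literature, and you explicitly verify the continuity $u_i \in C([0,T];L^2(\Omega))$ (which the paper leaves implicit); your quotient-continuity step just needs the standard remark that $L^2$ convergence of $p^{\text{ref}}_{t_n}$ gives a.e.\ convergence only along subsequences, which the usual subsequence argument repairs.
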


\begin{proof}
It is a well known property of the solutions of the heat equation that given the assumption of lower and upper bound on the initial condition, the solution $p$ of the heat equation \eqref{eq:fwdpdf}  satisfies $c \leq  p(t,\cdot) \leq C$ for all $t \in [0,T]$. See for example, \cite{elamvazhuthi2018bilinear}[Corollary IV.2 and Lemma IV.5]. 
Moreover, if $p_0 \in \D(\B)$, we know that $ \dot{p} = -\B p $, from Proposition \ref{prop:hteqprop}. This implies that $\langle \mathbf{1}, \dot{p}(t) \rangle_2 = \langle B \mathbf{1}, \dot{p}(t) \rangle_2 = \mathbf{0}$
Hence, $\dot{p}(t) \in L^2_{\perp} (\Omega)$ for all $ t \in [0,T]$. 

\end{proof}

Now we are ready to provide the proof of our main result.

{\it Proof of Theorem \ref{thm:tackdens}}.
First, we consider the case $p_0 \in L^2(\Omega)$. Then we know that $p_t \in \D(\B)$ for all $t \in (0,T]$. It is known that the semigroup generated by the $\B$ is $L^2 - L^{\infty}$ contractive. That is, for every $t >0$, there exists $M_t> 0$ such that $ \|\mathcal{T}(t) p_0 \|_\infty \leq M_t \|p_0\|_2 $ for all $t$. The semigroup $\T(t)$ is {\it irreducible} as an semigroup of operators on $L^2(\Omega)$. That is, $\T(t) p_0 $ is positive almost everywhere on $\Omega$, provided that $p_0$ is non-negative almost everywhere on $\Omega$.    Moreover, $\T(t) \mathbf{1} = \mathbf{1}$. Then it follows from \cite{gluck2019almost}[Proposition 2.21], that $\T(t)$ is also irreducible as an operator on $L^{\infty}(\Omega)$. This implies that there exists, for each $t \in (0,T]$ there exists $c_t>0$ such that $(\T(t) p_0)(x) \geq c_t$ for almost every $x \in \Omega$.
Now, we have that for each $t \in (0,T]$, there exist constants $c_t, C_t >0$ such that $c_t
\leq p_t \leq C_t$. The result for $p_0 \in L^2(\Omega)$ then follows from Lemma \ref{lem:extra}. 

For the case when $p_{\text{target}} = \delta_{x_0}$, the result follows from \cite{santambrogio2015optimal}[Proposition 8.10], there exists a solution to \eqref{eq:fwdpdf} in $AC([0,T];\mathcal{P}_1(\Omega))$, where $AC([0,T];\mathcal{P}_1(\Omega))$ is set of absolutely continuous curves on probability measures with finite moment, and that $p_t \in L^1(\Omega)$ for all $t \in (0,T]$. From \cite{davies1989heat}[Lemma 2.1.2] we know that there exists a heat kernel $K:[0,T] \times \Omega \times \Omega \rightarrow \R$ such that $T_t p_0 = \int_{\Omega}K(t,x,y)p_0(x)dx $  and it satisfies the bounds
\begin{equation}
0 \leq |K(t,x,y)| \leq a_t 
\end{equation}
for each $t \in (0,T]$ for some constant $a_t>0$, depending on $t>0$. This implies that in fact $p_t \in L^2(\Omega)$ for all $t \in (0,T]$ and the result once again, follows from Lemma \ref{lem:extra}.


\end{document}